\newtheorem{thm}{Theorem}[section]
\newtheorem{lem}[thm]{Lemma}
\newtheorem{assum}[thm]{Assumption}
\theoremstyle{definition}
\theoremstyle{remark}
\newtheorem{rem}{Remark}[section]
\newtheorem{defn}{Definition}
\numberwithin{equation}{section}
\begin{document}

\title[Heat equation with singular potentials]{The heat equation with strongly singular potentials}

\author[A. Altybay]{Arshyn Altybay$^{1,2,3,A}$}
\author[M. Ruzhansky]{Michael Ruzhansky$^{1,4,B}$}
\author[M. Sebih]{Mohammed Elamine Sebih$^{1,5,C}$}
\author[N. Tokmagambetov]{Niyaz Tokmagambetov$^{1,2,D}$}
\address{$^{1}$Department of Mathematics: Analysis, Logic and Discrete Mathematics, Ghent University, Belgium; $^{2}$Al-Farabi Kazakh National University, Almaty, Kazakhstan; $^{3}$Institute of Mathematics and Mathematical Modeling, Almaty, Kazakhstan; $^{4}$School of Mathematical Sciences, Queen Mary University of London, United Kingdom; $^{5}$Laboratory of Analysis and Control of Partial Differential Equations, Djillali Liabes University, Sidi Bel Abbes, Algeria}
\address{{\it E-mails:} $^{A}${\rm arshyn.altybay@gmail.com}, $^{B}${\rm michael.ruzhansky@ugent.be}, $^{C}${\rm sebihmed@gmail.com}, $^{D}${\rm tokmagambetov@math.kz}}

\keywords{Heat equation, singular potential, generalised solution, regularisation, mollifier, numerical analysis, distributional coefficient, delta function.}

\subjclass[2010]{35D99, 35K67, 34A45}

\begin{abstract}
In this paper we consider the heat equation with strongly singular potentials and prove that it has a "very weak solution". Moreover, we show the uniqueness and consistency results in some appropriate sense. The cases of positive and negative potentials are studied. Numerical simulations are done: one suggests so-called "laser heating and cooling" effects depending on a sign of the potential. The latter is justified by the physical observations.
\end{abstract}

\maketitle


\section{Introduction}
After the pioneering works due to Baras and Goldstein \cite{BG84a}, \cite{BG84b}, the heat equation with inverse-square potential in bounded and unbounded domains has attracted considerable attention during the last decades, we cite \cite{AFP17}, \cite{FM15}, \cite{Gul02}, \cite{IKM19}, \cite{IO19}, \cite{Mar03}, \cite{MS10} and \cite{VZ00} to name only few.

Our aim is to contribute to the study of the heat equation by incorporating more singular potentials. The major obstacle for considering general coefficients is related to the multiplication problem for distributions \cite{Sch54}. There are several ways to overcome this problem. One way is to use the notion of very weak solutions.

The concept of very weak solutions was introduced in \cite{GR15} for the analysis of second order hyperbolic equations with non-regular time-dependent coefficients, and was applied for the study of several physical models in \cite{MRT19}, \cite{RT17a}, and in \cite{RT17b}. In these papers the very weak solutions are presented for equations with time-dependent coefficients. In the recent paper \cite{Gar20}, the author introduces the concept of the very weak solution for the wave equation with space-depending coefficient. Here we study the Cauchy problem for the heat equation with a non-negative potential, we allow the potential to be discontinuous or even less regular and we want to apply the concept of very weak solutions to establish a well-posedness result. Also, we note that very weak solutions for fractional Klein-Gordon equations with singular masses were considered in \cite{ARST21}.

In this paper we consider the heat equation with strongly singular potentials, in particular, with a $\delta$-function and with a behaviour like "multiplication" of $\delta$-functions. The existence result of very weak solutions is proved. Also, we show the uniqueness of the very weak solution and the consistency with the classical solution in some appropriate senses. The cases of positive and negative potentials are studied and numerical simulations are given. Finally, one observes so-called "laser heating and cooling" effects depending on a sign of the potential.

\section{Part I: Non-negative potential}

In this section we consider the case when the potential $q$ is non-negative. But first let us fix some notations. For our convenience, we will write $f\lesssim g$, which means that there exists a positive constant $C$ such that $f \leq Cg$. Also, let us define
\begin{equation*}
\Vert u(t,\cdot)\Vert_{k}:= \Vert \nabla u(t,\cdot)\Vert_{L^2} + \sum_{l=0}^{k}\Vert \partial_{t}^{l}u(t,\cdot)\Vert_{L^2},
\end{equation*}
for all $k\in\mathbb Z_{+}$. In the case when $k=0$, we simply use $\Vert u(t,\cdot)\Vert$ instead of $\Vert u(t,\cdot)\Vert_{0}$.

Fix $T>0$. In the domain $\Omega:=\left(0,T\right)\times \mathbb{R}^{d}$ we consider the heat equation
\begin{equation}
\label{Equation}
\partial_{t}u(t,x)-\Delta u(t,x) + q(x)u(t,x)=0, \,\, (t,x)\in\Omega,
\end{equation}
with the Cauchy data $u(0,x)=u_{0}(x),$ where the potential $q$ is assumed to be non-negative and singular.

In the case when the potential is a regular function, we have the following lemma.
\begin{lem}\label{Lemma 1}
Let $u_{0}\in H^{1}(\mathbb{R}^d)$ and suppose that $q\in L^{\infty}(\mathbb{R}^d)$ is non-negative. Then, there is a unique solution $u\in C^{1}(\left[0,T\right]; L^{2}) \cap C(\left[0,T\right]; H^{1})$ to (\ref{Equation}) and it satisfies the energy estimate
\begin{equation}
\Vert u(t,\cdot)\Vert \lesssim \left(1+\Vert q\Vert_{L^{\infty}}\right)\Vert u_{0}\Vert_{H^{1}}. \label{Energy estimate}
\end{equation}
\end{lem}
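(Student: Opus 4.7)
My plan is to treat $A:=-\Delta+q$, with form domain $H^1(\R^d)$, as a non-negative self-adjoint operator on $L^2(\R^d)$; boundedness of $q$ makes this a routine bounded perturbation of $-\Delta$. By standard semigroup theory $-A$ generates a strongly continuous contraction semigroup $\{e^{-tA}\}_{t\ge 0}$, and I would take $u(t):=e^{-tA}u_{0}$ as the solution. The regularity $u\in C([0,T];H^{1})$ comes from $u_{0}\in H^{1}=\mathrm{dom}(A^{1/2})$ and the standard properties of the semigroup; the $C^{1}([0,T];L^{2})$ part follows by combining the smoothing of the semigroup for $t>0$ with a density argument on $u_{0}$. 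Uniqueness within this class will follow from the $L^2$ estimate derived below.

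For the energy estimate, the first step is the $L^{2}$ multiplier. Testing \eqref{Equation} against $u$ and integrating by parts in $x$ (legitimate since $u(t,\cdot)\in H^{1}(\R^d)$) yields
\begin{equation*}
\frac{1}{2}\frac{\d}{\d t}\Vert u(t,\cdot)\Vert_{L^{2}}^{2}+\Vert\nabla u(t,\cdot)\Vert_{L^{2}}^{2}+\int_{\R^{d}}q(x)u^{2}(t,x)\,\d x=0,
\end{equation*}
so that $\Vert u(t,\cdot)\Vert_{L^{2}}\leq\Vert u_{0}\Vert_{L^{2}}$ by non-negativity of the last two terms.

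To bound the gradient I would avoid testing against $\partial_{t}u$ (which the stated regularity does not comfortably permit) and instead use the spectral calculus for $A$: since $\lambda e^{-2t\lambda}\leq\lambda$ on $[0,\infty)$, one has
\begin{align*}
\Vert\nabla u(t,\cdot)\Vert_{L^{2}}^{2}+\int_{\R^{d}}q(x)u^{2}(t,x)\,\d x &=\langle Au(t),u(t)\rangle=\langle Ae^{-2tA}u_{0},u_{0}\rangle\\
&\leq\langle Au_{0},u_{0}\rangle\leq\Vert\nabla u_{0}\Vert_{L^{2}}^{2}+\Vert q\Vert_{L^{\infty}}\Vert u_{0}\Vert_{L^{2}}^{2}.
\end{align*}
Dropping the non-negative potential term on the left and combining with the $L^{2}$ bound gives $\Vert u(t,\cdot)\Vert\lesssim(1+\Vert q\Vert_{L^{\infty}}^{1/2})\Vert u_{0}\Vert_{H^{1}}\lesssim(1+\Vert q\Vert_{L^{\infty}})\Vert u_{0}\Vert_{H^{1}}$, which is \eqref{Energy estimate}.

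The only delicate point, in my view, is the rigorous justification of the first displayed identity within the stated regularity class; this should be handled by first establishing the identity for smooth approximations of $u_{0}$ (for which the solution is classical) and then passing to the limit using continuity of $e^{-tA}$ as a map $H^{1}\to H^{1}$. The use of functional calculus for the gradient bound is deliberately chosen to avoid needing $\partial_{t}u\in H^{1}$, which would otherwise be required to close the energy identity via the $\partial_{t}u$-multiplier.
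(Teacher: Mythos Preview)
Your argument is correct but follows a genuinely different route from the paper. The paper proceeds in the opposite order: it first multiplies by $\partial_{t}u$ to obtain the energy identity
\[
\partial_{t}\bigl(\Vert\nabla u\Vert_{L^{2}}^{2}+\Vert q^{1/2}u\Vert_{L^{2}}^{2}\bigr)=-2\Vert\partial_{t}u\Vert_{L^{2}}^{2}\le 0,
\]
which yields the gradient bound (and a bound on $\Vert q^{1/2}u\Vert_{L^{2}}$) directly. For the $L^{2}$ bound on $u$ the paper does \emph{not} use the $u$-multiplier; instead it rewrites the equation as $u_{t}-\Delta u=-qu$, applies Duhamel with the free heat kernel, and controls the source via $\Vert qu\Vert_{L^{2}}\le\Vert q\Vert_{L^{\infty}}^{1/2}\Vert q^{1/2}u\Vert_{L^{2}}$, feeding in the bound just obtained. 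Your $u$-multiplier estimate is in fact proved separately in the paper as Lemma~\ref{Lemma 2}.

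The trade-offs are clear. Your semigroup/spectral-calculus argument is more robust on regularity: you never need $\partial_{t}u\in H^{1}$ (or even $\partial_{t}u\in L^{2}$ at $t=0$), and the inequality $\lambda e^{-2t\lambda}\le\lambda$ replaces the formal $\partial_{t}u$-pairing that the paper does not fully justify at the stated regularity. The paper's approach, on the other hand, is purely PDE-based (no spectral theory) and produces as a by-product the pointwise-in-time control of $\Vert q^{1/2}u\Vert_{L^{2}}$, which it then reuses; it also keeps the argument at the level of energy identities and Duhamel, which generalises more readily to the later inhomogeneous and perturbed problems in the paper.
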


\begin{proof}
By multiplying the equation (\ref{Equation}) by $u_t$ and integrating with respect to $x$, we obtain
\begin{equation}
Re \left(\langle u_{t}(t,\cdot),u_{t}(t,\cdot)\rangle_{L^2} + \langle -\Delta u(t,\cdot),u_{t}(t,\cdot)\rangle_{L^2} + \langle q(\cdot)u(t,\cdot),u_{t}(t,\cdot)\rangle_{L^2} \right)=0. \label{Energy functional}
\end{equation}
One observes
\begin{equation*}
Re \langle u_{t}(t,\cdot),u_{t}(t,\cdot)\rangle_{L^2}=\langle u_{t}(t,\cdot),u_{t}(t,\cdot)\rangle_{L^2}=\Vert u_{t}(t,\cdot)\Vert_{L^2}^{2}.
\end{equation*}
Also, we see that
\begin{equation*}
Re \langle -\Delta u(t,\cdot),u_{t}(t,\cdot)\rangle_{L^2}=\frac{1}{2}\partial_{t}\langle \nabla u(t,\cdot),\nabla u(t,\cdot)\rangle_{L^2}=\frac{1}{2}\partial_{t}\Vert \nabla u(t,\cdot)\Vert_{L^2}^{2}
\end{equation*}
and
\begin{equation*}
Re \langle q(\cdot)u(t,\cdot),u_{t}(t,\cdot)\rangle_{L^2}=\frac{1}{2}\partial_{t}\langle q^{\frac{1}{2}}(\cdot)u(t,\cdot),q^{\frac{1}{2}}(\cdot)u(t,\cdot)\rangle_{L^2}=\frac{1}{2}\partial_{t}\Vert q^{\frac{1}{2}}(\cdot) u(t,\cdot)\Vert_{L^2}^{2}.
\end{equation*}
It follows from (\ref{Energy functional}) that
\begin{equation}
\partial_{t}\left[\Vert \nabla u(t,\cdot)\Vert_{L^2}^{2}+\Vert q^{\frac{1}{2}}(\cdot) u(t,\cdot)\Vert_{L^2}^{2}\right] =-2 \Vert u_{t}(t,\cdot)\Vert_{L^2}^{2}. \label{Energy functional 1}
\end{equation}
Let us denote by
\begin{equation*}
E(t):=\Vert \nabla u(t,\cdot)\Vert_{L^2}^{2}+\Vert q^{\frac{1}{2}}(\cdot) u(t,\cdot)\Vert_{L^2}^{2},
\end{equation*}
the energy functional. 
It follows from (\ref{Energy functional 1}) that $E^{\prime}(t) \leq 0$, and thus
\begin{equation*}
E(t)\leq E(0).
\end{equation*}
By taking into account that
$\Vert q^{\frac{1}{2}}(\cdot)u_{0}(\cdot)\Vert_{L^2}^{2}$ can be estimated by
\begin{equation*}
\Vert q^{\frac{1}{2}}(\cdot)u_{0}(\cdot)\Vert_{L^2}^{2}\leq \Vert q(\cdot)\Vert_{L^{\infty}}\Vert u_{0}(\cdot)\Vert_{L^2}^{2},
\end{equation*}
we get
\begin{equation*}
\Vert \nabla u(t,\cdot)\Vert_{L^2}^{2}+\Vert q^{\frac{1}{2}}(\cdot) u(t,\cdot)\Vert_{L^2}^{2} \leq \Vert \nabla u_{0}\Vert_{L^2}^{2}+\Vert q(\cdot)\Vert_{L^{\infty}}\Vert u_{0}\Vert_{L^2}^{2}.
\end{equation*}
Thus, we have
\begin{equation}
\Vert q^{\frac{1}{2}}(\cdot) u(t,\cdot)\Vert_{L^2}^{2}\leq \Vert \nabla u_{0}\Vert_{L^2}^{2}+\Vert q(\cdot)\Vert_{L^{\infty}}\Vert u_{0}\Vert_{L^2}^{2} \label{Estimate qxu}
\end{equation}
and
\begin{equation*}
\Vert \nabla u(t,\cdot)\Vert_{L^2}^{2}\leq \Vert \nabla u_{0}\Vert_{L^2}^{2}+\Vert q(\cdot)\Vert_{L^{\infty}}\Vert u_{0}\Vert_{L^2}^{2},
\end{equation*}
and consequently, one can be seen that
\begin{equation}
\Vert \nabla u(t,\cdot)\Vert_{L^2}\leq \left(1+\Vert q\Vert_{L^{\infty}}^{\frac{1}{2}}\right)^{2}\Vert u_{0}\Vert_{H^{1}}. \label{Estimate grad u}
\end{equation}

To obtain the estimate for $u$, we rewrite the equation (\ref{Equation}) as follows
\begin{equation}
\label{Equation Duhamel}
u_{t}(t,x)-\Delta u(t,x)= -q(x)u(t,x) ,~~~(t,x)\in\left(0,T\right)\times \mathbb{R}^{d}.
\end{equation}
Here, considering $-q(x)u(t,x)$ as a source term, we denote it by $f(t, x):=-q(x)u(t,x)$. By using Duhamel's principle (see, e.g. \cite{Eva98}), we represent the solution to (\ref{Equation Duhamel}) in the form
\begin{equation}
u(t,x)=\phi_{t}\ast u_{0}(x) + \int_{0}^{t}\phi_{t-s}\ast f_{s}(x)ds, \label{Sol Eq Duhamel}
\end{equation}
where $f_{s}=f(s, \cdot)$ and $\phi_{t}=\phi(t, \cdot)$. Here, $\phi$ is the fundamental solution (heat kernel) to the heat equation, and it satisfies
\begin{equation*}
\Vert \phi(t, \cdot)\Vert_{L^{1}}=1.
\end{equation*}
Now, taking the $L^{2}$-norm in (\ref{Sol Eq Duhamel}) and using Young's inequality, we arrive at
\begin{align*}
\Vert u(t,\cdot)\Vert_{L^{2}} & \leq \Vert \phi_{t}\Vert_{L^{1}}\Vert u_{0}\Vert_{L^{2}} + \int_{0}^{T}\Vert \phi_{t-s}\Vert_{L^{1}}\Vert f_{s}\Vert_{L^{2}} ds\\
& \leq \Vert u_{0}\Vert_{L^{2}} + \int_{0}^{T}\Vert f_{s}\Vert_{L^{2}} ds\\
& \leq \Vert u_{0}\Vert_{L^{2}} + \int_{0}^{T}\Vert q(\cdot)u(s,\cdot)\Vert_{L^{2}} ds.
\end{align*}
We estimate the term $\Vert q(\cdot)u(s,\cdot)\Vert_{L^{2}}$ as
\begin{equation*}
\Vert q(\cdot)u(s,\cdot)\Vert_{L^{2}} \leq \Vert q\Vert_{L^{\infty}}^{\frac{1}{2}}\Vert q^{\frac{1}{2}}u(s,\cdot)\Vert_{L^{2}},
\end{equation*}
and using the estimate (\ref{Estimate qxu}), one observes
\begin{equation}
\Vert u(t,\cdot)\Vert_{L^2}\lesssim \left(1+\Vert q\Vert_{L^{\infty}}^{\frac{1}{2}}\right)^{2}\Vert u_{0}\Vert_{H^{1}}. \label{Estimate u}
\end{equation}
Summing the estimates proved above, we conclude (\ref{Energy estimate}).

\end{proof}

\begin{rem}
We can also prove that the estimate
\begin{equation*}
\Vert \partial_{t}^{k}u(t,\cdot)\Vert_{L^2}\lesssim \left(1+\Vert q\Vert_{L^{\infty}}\right)
\Vert u_{0}\Vert_{H^{2k+1}},
\end{equation*}
is valid for all $k\geq 0$, by requiring higher regularity on $u_0$. To do so, we denote by $v_{0}:=u$ and its derivatives by $v_{k}:=\partial_{t}^{k}u$, where $u$ is the solution of the Cauchy problem (\ref{Equation}). Using (\ref{Estimate u}) and the property that if $v_{k}$ solves the equation
\begin{equation*}
\partial_{t}v_{k}(t,x)-\Delta v_{k}(t,x) + q(x)v_{k}(t,x)=0,
\end{equation*}
with the initial data $v_{k}(0,x)$, then $v_{k+1}=\partial_{t}v_{k}$ solves the same equation with the initial data
\begin{equation*}
v_{k+1}(0,x)=\Delta v_{k}(0,x)-q(x)v_{k}(0,x),
\end{equation*}
we get our estimate for $\partial_{t}^{k}u$ for all $k\geq 0$.
\end{rem}

To prove the uniqueness and consistency of the very weak solution, we will also need the following lemma.
\begin{lem}
\label{Lemma 2}
Let $u_{0}\in H^{1}(\mathbb{R}^{d})$ and assume that $q\in L^{\infty}(\mathbb{R}^d)$ is non-negative. Then, the estimate
\begin{equation}
\label{Energy estimate 2}
\Vert u(t,\cdot)\Vert_{L^2} \lesssim \Vert u_{0}\Vert_{L^2},
\end{equation}
holds for the unique solution $u\in C^{1}(\left[0,T\right];L^{2})\cap C(\left[0,T\right];H^{1})$ of the Cauchy problem (\ref{Equation}).
\end{lem}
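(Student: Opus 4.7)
The plan is to obtain the $L^2$ estimate by a standard energy argument, but pairing the equation against $u$ (rather than $u_t$ as in Lemma~\ref{Lemma 1}). The non-negativity of $q$ is exactly what makes this work.

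First I would multiply equation \eqref{Equation} by $\overline{u(t,\cdot)}$, integrate over $\mathbb{R}^d$, and take the real part. The three terms become
\begin{equation*}
\mathrm{Re}\,\langle u_t(t,\cdot),u(t,\cdot)\rangle_{L^2} = \tfrac{1}{2}\partial_t \Vert u(t,\cdot)\Vert_{L^2}^2,
\end{equation*}
then an integration by parts gives $\mathrm{Re}\,\langle -\Delta u,u\rangle_{L^2}=\Vert\nabla u(t,\cdot)\Vert_{L^2}^2$, and finally $\mathrm{Re}\,\langle q u,u\rangle_{L^2}=\Vert q^{1/2} u(t,\cdot)\Vert_{L^2}^2$, which is well defined since $q\in L^\infty$ and non-negative.

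Combining these I would arrive at the identity
\begin{equation*}
\tfrac{1}{2}\partial_t \Vert u(t,\cdot)\Vert_{L^2}^2 + \Vert\nabla u(t,\cdot)\Vert_{L^2}^2 + \Vert q^{1/2} u(t,\cdot)\Vert_{L^2}^2 = 0.
\end{equation*}
Because $q\geq 0$, the last two terms on the left-hand side are non-negative, so $\partial_t \Vert u(t,\cdot)\Vert_{L^2}^2\leq 0$. Integrating from $0$ to $t$ yields $\Vert u(t,\cdot)\Vert_{L^2}\leq \Vert u_0\Vert_{L^2}$, which is \eqref{Energy estimate 2}.

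There is essentially no obstacle here; the only subtlety is to justify the integration by parts and the time-differentiation under the integral, which follows from the regularity $u\in C^1([0,T];L^2)\cap C([0,T];H^1)$ already guaranteed by Lemma~\ref{Lemma 1}. It is worth noting that, in contrast to the estimate \eqref{Energy estimate} of Lemma~\ref{Lemma 1}, the bound obtained this way carries no dependence on $\Vert q\Vert_{L^\infty}$ at all — this is precisely the feature that will later allow the uniqueness and consistency of the very weak solution to be proved with constants independent of the regularising parameter.
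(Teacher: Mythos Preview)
Your proof is correct and follows essentially the same approach as the paper: multiply by $u$, integrate, take real parts, and use non-negativity of $q$ to conclude that $\partial_t\Vert u(t,\cdot)\Vert_{L^2}^2\leq 0$. The paper's version is terser (and in fact drops a harmless factor of $2$ in the displayed identity), while you spell out each term and add the useful remark that the resulting bound is independent of $\Vert q\Vert_{L^\infty}$.
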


\begin{proof}
Again, by multiplying the equation (\ref{Equation}) by $u$ and integrating over $\mathbb{R}^{d}$ in $x$, we derive
\begin{equation*}
Re \left(\langle u_{t}(t,\cdot),u(t,\cdot)\rangle_{L^2} + \langle -\Delta u(t,\cdot),u(t,\cdot)\rangle_{L^2} + \langle q(\cdot)u(t,\cdot),u(t,\cdot)\rangle_{L^2} \right)=0.
\end{equation*}
Using the similar arguments as in Lemma \ref{Lemma 1}, we obtain
\begin{equation}
\label{Energy functional 2}
\partial_{t}\Vert u(t,\cdot)\Vert_{L^2}^{2} = - \Vert \nabla u(t,\cdot)\Vert_{L^2}^{2} - \Vert q^{\frac{1}{2}}(\cdot)u(t,\cdot)\Vert_{L^2}^{2} \leq0.
\end{equation}
This ends the proof of the lemma.
\end{proof}

Now, let us show that the Cauchy problem (\ref{Equation}) has a very weak solution. We start by regularising the coefficient $q$ and the initial data $u_0$ using a suitable mollifier $\psi$, generating families of smooth functions $(q_{\varepsilon})_{\varepsilon}$ and $(u_{0,\varepsilon})_{\varepsilon}$. Namely,
\begin{equation*}
q_{\varepsilon}(x)=q\ast \psi_{\varepsilon }(x),~~~~u_{0,\varepsilon}(x)=u_{0}\ast \psi_{\varepsilon }(x),
\end{equation*}
where
\begin{equation*}
\psi_{\varepsilon }(x)=\omega(\varepsilon)^{-d}\psi(x/\omega(\varepsilon)), ~~~\varepsilon\in\left(0,1\right],
\end{equation*}
and $\omega(\varepsilon)$ is a positive function converging to $0$ as $\varepsilon \rightarrow 0$ to be chosen later. The function $\psi$ is a Friedrichs-mollifier, i.e. $\psi\in C_{0}^{\infty}(\mathbb{R}^{d})$, $\psi\geq 0$ and $\int\psi =1$.

\begin{assum}
On the regularisation of the coefficient $q$ and the initial data $u_{0}$ we make the following assumptions:
there exist $N, N_{0}\in \mathbb{N}_{0}$ such that
\begin{equation}
\label{Moderetness hyp data}
\Vert u_{0,\varepsilon}\Vert_{H^1}\leq C_{0}\omega(\varepsilon)^{-N_0},
\end{equation}
and
\begin{equation}
\label{Moderetness hyp coeff}
\Vert q_{\varepsilon}\Vert_{L^{\infty}}\leq C\omega(\varepsilon)^{-N},
\end{equation}
for $\varepsilon\in(0, 1]$.
\end{assum}

\begin{rem}
We note that such assumptions are natural for distributions. Indeed, by the structure theorems for distributions (see, e.g. \cite{FJ98}), we know that every compactly supported distribution can be represented by a
finite sum of (distributional) derivatives of continuous functions. Precisely, for $T\in \mathcal{E}'(\mathbb{R}^{d})$ we can find $n\in \mathbb{N}$ and functions $f_{\alpha}\in C(\mathbb{R}^{d})$ such that $T=\sum_{\vert \alpha\vert \leq n}\partial^{\alpha}f_{\alpha}$. The convolution of $T$ with a mollifier yields
\begin{equation*}
T\ast\psi_{\varepsilon}=\sum_{\vert \alpha\vert \leq n}\partial^{\alpha}f_{\alpha}\ast\psi_{\varepsilon}=\sum_{\vert \alpha\vert \leq n}f_{\alpha}\ast\partial^{\alpha}\psi_{\varepsilon}=\sum_{\vert \alpha\vert \leq n}\omega(\varepsilon)^{-\vert\alpha\vert}f_{\alpha}\ast\left(\omega(\varepsilon)^{-1}\partial^{\alpha}\psi(x/\omega(\varepsilon))\right).
\end{equation*}
It is clear that $T$ satisfies the above assumptions.
\end{rem}

\subsection{Existence of very weak solutions}

In this subsection we deal with the existence of very weak solutions. We start by calling the definition of the moderateness.
\begin{defn}[Moderateness] \label{Def:Moderetness}
Let $X$ be a Banach space with the norm $\|\cdot\|_{X}$. Then we say that a net of functions $(f_{\varepsilon})_{\varepsilon}$ from $X$ is $X$-moderate, if there exist $N\in\mathbb{N}_{0}$ and $c>0$ such that
\begin{equation*}
\Vert f_{\varepsilon}\Vert_{X} \leq c\omega(\varepsilon)^{-N}.
\end{equation*}
\end{defn}
In what follows, we will use particular cases of $X$. Namely, ${H^1}$-moderate, ${L^{\infty}}$-moderate, and  $C(\left[0,T\right];H^{1})$-moderate families. For the last, we will shortly write $C$-moderate.

\begin{rem}
By assumptions, $(u_{0,\varepsilon})_{\varepsilon}$ and $(q_{\varepsilon})_{\varepsilon}$ are moderate.
\end{rem}

Now we will fix a notation. By writing $q\geq 0$, we mean that all regularisations $q_\varepsilon$ in our calculus are non-negative functions.
\begin{defn}
Let $q\geq 0$. The net $(u_{\varepsilon})_{\varepsilon}$ is said to be a very weak solution to the Cauchy problem (\ref{Equation}), if there exist an ${L^{\infty}}$-moderate regularisation $(q_{\varepsilon})_{\varepsilon}$ of the coefficient $q$ and $H^1$-moderate regularisation $(u_{0,\varepsilon})_{\varepsilon}$ of the initial function $u_0$, such that $(u_{\varepsilon})_{\varepsilon}$ solves the regularized equation
\begin{equation}
\label{Regularized equation}
\partial_{t}u_{\varepsilon}(t,x)-\Delta u_{\varepsilon}(t,x) + q_{\varepsilon}(x)u_{\varepsilon}(t,x)=0, ~~~(t,x)\in\left(0,T\right)\times \mathbb{R}^{d},
\end{equation}
with the Cauchy data $u_{\varepsilon}(0,x)=u_{0,\varepsilon}(x),$ for all $\varepsilon\in\left(0,1\right]$, and is $C$-moderate.
\end{defn}

With this setup the existence of a very weak solution becomes straightforward. But we will also analyse its properties later on.
\begin{thm}[Existence of a very weak solution]
Let $q\geq 0$. Assume that the regularisations of the coefficient $q$ and the Cauchy data $u_{0}$ satisfy the assumptions (\ref{Moderetness hyp data}) and (\ref{Moderetness hyp coeff}). Then the Cauchy problem (\ref{Equation}) has a very weak solution.
\end{thm}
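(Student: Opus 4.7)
The plan is to pass from the singular Cauchy problem to a family of regularised problems, solve each of them using Lemma \ref{Lemma 1}, and then check the two moderateness conditions required by the definition of a very weak solution: the $L^\infty$-moderateness of $(q_\varepsilon)_\varepsilon$, the $H^1$-moderateness of $(u_{0,\varepsilon})_\varepsilon$ and, crucially, the $C([0,T];H^1)$-moderateness of the solution net $(u_\varepsilon)_\varepsilon$.

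First, I would fix the regularisations $q_\varepsilon=q\ast\psi_\varepsilon$ and $u_{0,\varepsilon}=u_0\ast\psi_\varepsilon$ provided by hypothesis, so that $(q_\varepsilon)_\varepsilon$ and $(u_{0,\varepsilon})_\varepsilon$ are $L^\infty$- and $H^1$-moderate in the sense of Definition \ref{Def:Moderetness}, with bounds $\|q_\varepsilon\|_{L^\infty}\le C\omega(\varepsilon)^{-N}$ and $\|u_{0,\varepsilon}\|_{H^1}\le C_0\omega(\varepsilon)^{-N_0}$. Since each $q_\varepsilon$ is smooth, non-negative and bounded, and each $u_{0,\varepsilon}\in H^1(\mathbb{R}^d)$, Lemma \ref{Lemma 1} applies for every fixed $\varepsilon\in(0,1]$ and produces a unique solution $u_\varepsilon\in C^1([0,T];L^2)\cap C([0,T];H^1)$ of the regularised equation (\ref{Regularized equation}) with data $u_{0,\varepsilon}$.

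Next, I would upgrade the pointwise-in-$\varepsilon$ existence to the required moderateness. Applying the energy estimate (\ref{Energy estimate}) of Lemma \ref{Lemma 1} to $u_\varepsilon$ gives, uniformly in $t\in[0,T]$,
\begin{equation*}
\Vert u_\varepsilon(t,\cdot)\Vert \lesssim \bigl(1+\Vert q_\varepsilon\Vert_{L^\infty}\bigr)\Vert u_{0,\varepsilon}\Vert_{H^1}
\lesssim \bigl(1+C\omega(\varepsilon)^{-N}\bigr)C_0\omega(\varepsilon)^{-N_0}
\lesssim \omega(\varepsilon)^{-(N+N_0)}.
\end{equation*}
Since $\Vert u_\varepsilon(t,\cdot)\Vert=\Vert \nabla u_\varepsilon(t,\cdot)\Vert_{L^2}+\Vert u_\varepsilon(t,\cdot)\Vert_{L^2}$ bounds the $H^1$-norm at each $t$, and the estimate is uniform in $t\in[0,T]$, this is exactly the statement that $(u_\varepsilon)_\varepsilon$ is $C([0,T];H^1)$-moderate, i.e.\ $C$-moderate, with moderateness index $N+N_0$.

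Combining the three moderateness properties with the fact that $u_\varepsilon$ solves (\ref{Regularized equation}) for every $\varepsilon\in(0,1]$ yields a very weak solution in the sense of the definition given above. I do not expect any real obstacle here: Lemma \ref{Lemma 1} already does the analytic work, and the main point is simply to observe that the polynomial-in-$\omega(\varepsilon)^{-1}$ blow-up of the right-hand side of (\ref{Energy estimate}) is compatible with the moderateness bookkeeping, so that the structural conditions required by the definition of very weak solution are automatically propagated from the data to the solution.
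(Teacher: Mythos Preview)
Your proof is correct and follows essentially the same approach as the paper: apply the energy estimate (\ref{Energy estimate}) from Lemma \ref{Lemma 1} to the regularised problem and observe that the product $(1+\Vert q_\varepsilon\Vert_{L^\infty})\Vert u_{0,\varepsilon}\Vert_{H^1}$ is bounded by $\omega(\varepsilon)^{-(N+N_0)}$, giving $C$-moderateness of $(u_\varepsilon)_\varepsilon$. The paper's proof is just a terser version of exactly this argument.
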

\begin{proof}
Using the moderateness assumptions (\ref{Moderetness hyp data}), (\ref{Moderetness hyp coeff}), and the energy estimate (\ref{Energy estimate}), we arrive at
\begin{align*}
\Vert u_{\varepsilon}(t,\cdot)\Vert & \lesssim \omega(\varepsilon)^{-N} \times \omega(\varepsilon)^{-N_{0}}\\
& \lesssim \omega(\varepsilon)^{-N-N_{0}},
\end{align*}
concluding that $(u_{\varepsilon})_{\varepsilon}$ is $C$-moderate.
\end{proof}

\subsection{Uniqueness results}
In this subsection we discuss uniqueness of the very weak solution to the Cauchy problem (\ref{Equation}) for different cases of regularity of the potential $q$.

\subsubsection{\textbf{The classical case}} In the case when $q\in C^{\infty}(\mathbb{R}^{d})$, we require further conditions on the mollifiers, to ensure the uniqueness.

In the sequel, we are interested in the families of mollifiers with "$n$" vanishing moments. Let us define them as in the following.

\begin{defn} \label{defn moments}
\leavevmode
\begin{itemize}
    \item We denote by $\mathbb{A}_{n}$, the set of mollifiers defined by
\begin{equation}
\mathbb{A}_{n}=\left\lbrace \text{Friedrichs-mollifiers } \psi ~:~ \int_{\mathbb{R}^d}x^{k}\psi(x)dx=0 ~\text{ for }~ 1\leq k\leq n\right\rbrace. \label{Moments condition}
\end{equation}
    \item We say that $\psi\in \mathbb{A}_{\infty}$, if $\psi\in \mathbb{A}_{n}$ for all $n\in \mathbb{N}$.
\end{itemize}
\end{defn}

\begin{rem}
To construct such sets of mollifiers, we consider a Friedrichs-mollifier $\psi$ and set
\begin{equation*}
\Phi(x)=a_{0}\psi(x)+a_{1}\psi^{\prime}(x)+...+a_{n-1}\psi^{n-1}(x),
\end{equation*}
where the constants $a_{0}, \dots, a_{n-1}$ are determined by the conditions in (\ref{Moments condition}).
\end{rem}

\begin{lem} \label{lem q_eps-q}
For $N\in \mathbb{N}$, let $\psi\in\mathbb{A}_{N-1}$ and assume that $q\in C^{\infty}(\mathbb{R}^{d})$. Then, the estimate
\begin{equation}
\vert q_{\varepsilon}(x)-q(x)\vert \leq C\omega^{N}(\varepsilon) \label{Estimate q_eps-q}
\end{equation}
holds true for all $x\in \mathbb{R}^{d}$.
\end{lem}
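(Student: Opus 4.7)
The plan is to compare $q_\varepsilon(x)$ and $q(x)$ pointwise via a Taylor expansion, then exploit the vanishing-moment condition to kill the intermediate terms. Since $\int \psi = 1$ and $\psi_\varepsilon(y) = \omega(\varepsilon)^{-d}\psi(y/\omega(\varepsilon))$, the change of variables $y = \omega(\varepsilon)z$ gives
\begin{equation*}
q_\varepsilon(x) - q(x) = \int_{\mathbb{R}^d}\bigl[q(x-\omega(\varepsilon)z) - q(x)\bigr]\psi(z)\,dz.
\end{equation*}

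Next, since $q \in C^\infty(\mathbb{R}^d)$, I would apply Taylor's formula of order $N$ at $x$ in the variable $-\omega(\varepsilon)z$:
\begin{equation*}
q(x - \omega(\varepsilon)z) = q(x) + \sum_{1\leq |\alpha|\leq N-1}\frac{(-\omega(\varepsilon))^{|\alpha|}}{\alpha!}\,z^\alpha \partial^\alpha q(x) + \omega(\varepsilon)^N r_N(x,z),
\end{equation*}
where the remainder has the form $r_N(x,z) = \sum_{|\alpha|=N}\frac{(-1)^N}{\alpha!}z^\alpha \partial^\alpha q(\xi_{x,z})$ for some point $\xi_{x,z}$ on the segment joining $x$ and $x-\omega(\varepsilon)z$. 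Substituting this into the integral above, the $q(x)$ term cancels, and each monomial term of order $1\leq |\alpha| \leq N-1$ produces a factor $\int z^\alpha \psi(z)\,dz$, which vanishes by the hypothesis $\psi \in \mathbb{A}_{N-1}$ (using the definition that all monomial moments of $\psi$ up to order $N-1$ are zero). Consequently only the remainder survives:
\begin{equation*}
q_\varepsilon(x) - q(x) = \omega(\varepsilon)^N \int_{\mathbb{R}^d} r_N(x,z)\,\psi(z)\,dz.
\end{equation*}

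Finally, I would bound the remaining integral. Because $\psi$ is a Friedrichs-mollifier, $\psi$ has compact support, so $|z|^N \psi(z)$ is integrable. The points $\xi_{x,z}$ lie in $x + \omega(\varepsilon)\,\mathrm{supp}(\psi) \subset x + \mathrm{supp}(\psi)$ for $\varepsilon \in (0,1]$, a set of uniformly bounded diameter. Thus $|r_N(x,z)|$ can be estimated by a constant times the supremum of the order-$N$ derivatives of $q$ on a fixed-size neighborhood of $x$, multiplied by $|z|^N$. This yields the desired bound $|q_\varepsilon(x) - q(x)| \leq C \omega(\varepsilon)^N$.

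The main obstacle is the last step: for $q \in C^\infty(\mathbb{R}^d)$ without any a priori global control, the constant $C$ a priori depends on bounds of the $N$-th derivatives of $q$ near $x$. One needs either to assume implicitly that $q$ has bounded derivatives of all orders on $\mathbb{R}^d$, or to read the estimate as locally uniform in $x$; beyond this minor regularity caveat the argument is a direct consequence of Taylor's theorem and the moment-cancellation property built into $\mathbb{A}_{N-1}$.
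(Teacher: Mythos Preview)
Your proof is correct and follows essentially the same approach as the paper: change variables in the convolution, Taylor-expand $q$ to order $N$, use the vanishing moments of $\psi\in\mathbb{A}_{N-1}$ to eliminate the intermediate terms, and bound the order-$N$ remainder using the compact support of $\psi$. Your version is in fact more carefully written than the paper's (you keep the signed integral so that the moment cancellation actually applies, and you explicitly flag the dependence of $C$ on local bounds of $\partial^\alpha q$), but the strategy is identical.
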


\begin{proof}
Let $x\in \mathbb{R}^{d}$. We have
\begin{equation*}
\vert q_{\varepsilon}(x)-q(x)\vert \leq \omega^{-d}(\varepsilon)\int_{\mathbb{R}^{d}}\vert q(y)-q(x)\vert\psi\left(\omega^{-1}(\varepsilon)(y-x)\right) dy.
\end{equation*}
Making the change $z=\omega^{-d}(\varepsilon)(y-x)$, we get
\begin{equation*}
\vert q_{\varepsilon}(x)-q(x)\vert \leq \int_{\mathbb{R}^{d}}\vert q(x+\omega(\varepsilon)z)-q(x)\vert\psi(z) dz.
\end{equation*}
Expanding $q$ to order $N-1$, we get
\begin{equation*}
q(x+\omega(\varepsilon)z)-q(x)=\sum_{k=0}^{N} \frac{1}{(k-1)!}D^{(k-1)}q(x)(\omega(\varepsilon)z)^{k-1}+\mathcal{O}(\omega^{N}(\varepsilon)).
\end{equation*}
We get our estimate provided that the first $N-1$ moments of the mollifier $\psi$ vanish, finishing the proof of the lemma.
\end{proof}


To make things clear in what follows, we briefly repeat our regularisation nets. We regularise the coefficient $q$ and the initial data $u_0$ using suitable mollifiers $\psi, \Tilde{\psi}$, generating families of smooth functions $(q_{\varepsilon})_{\varepsilon}, (\Tilde{q}_{\varepsilon})_{\varepsilon}$ and $(u_{0,\varepsilon})_{\varepsilon}, (\Tilde{u}_{0,\varepsilon})_{\varepsilon}$. Namely,
\begin{equation*}
q_{\varepsilon}(x)=q\ast \psi_{\varepsilon }(x),~~~~u_{0,\varepsilon}(x)=u_{0}\ast \psi_{\varepsilon }(x),
\end{equation*}
\begin{equation*}
\Tilde{q}_{\varepsilon}(x)=q\ast \Tilde{\psi}_{\varepsilon }(x),~~~~\Tilde{u}_{0,\varepsilon}(x)=u_{0}\ast \Tilde{\psi}_{\varepsilon }(x),
\end{equation*}
where
\begin{equation*}
\psi_{\varepsilon }(x)=\omega(\varepsilon)^{-d}\psi(x/\omega(\varepsilon)), ~~~\varepsilon\in\left(0,1\right],
\end{equation*}
\begin{equation*}
\Tilde{\psi}_{\varepsilon }(x)=\omega(\varepsilon)^{-d}\Tilde{\psi}(x/\omega(\varepsilon)), ~~~\varepsilon\in\left(0,1\right],
\end{equation*}
and $\omega(\varepsilon)$ is a positive function converging to $0$ as $\varepsilon \rightarrow 0$ to be chosen later.

\begin{defn}
We say that the very weak solution to the Cauchy problem (\ref{Equation}) is unique, if for all $\psi, \Tilde{\psi}\in \mathbb{A}_{\infty}$, such that
\begin{equation}\label{A-negl}
    \Vert u_{0,\varepsilon} - \Tilde{u}_{0,\varepsilon}\Vert_{L^2} \lesssim \omega^{k}(\varepsilon) \,\, \left(\hbox{and} \,\, \Vert q_{\varepsilon} - \Tilde{q}_{\varepsilon}\Vert_{L^{\infty}} \lesssim \omega^{k}(\varepsilon)\right),
\end{equation}
for all $k>0$, we have
\begin{equation*}
\Vert u_{\varepsilon}(t,\cdot)-\Tilde{u}_{\varepsilon}(t,\cdot)\Vert_{L^{2}} \leq \omega^{N}(\varepsilon),
\end{equation*}
for all $N\in \mathbb{N}$, where $(u_{\varepsilon})_{\varepsilon}$ and $(\Tilde{u}_{\varepsilon})_{\varepsilon}$ solve, respectively, the families of the Cauchy problems
\begin{equation*}
\left\lbrace
\begin{array}{l}
\partial_{t}u_{\varepsilon}(t,x)-\Delta u_{\varepsilon}(t,x) + q_{\varepsilon}(x)u_{\varepsilon}(t,x)=0 ,~~~(t,x)\in\left[0,T\right]\times \mathbb{R}^{d},\\
u_{\varepsilon}(0,x)=u_{0,\varepsilon}(x),
\end{array}
\right.
\end{equation*}
and
\begin{equation*}
\left\lbrace
\begin{array}{l}
\partial_{t}\Tilde{u}_{\varepsilon}(t,x)-\Delta \Tilde{u}_{\varepsilon}(t,x) + \Tilde{q}_{\varepsilon}(x)\Tilde{u}_{\varepsilon}(t,x)=0 ,~~~(t,x)\in\left[0,T\right]\times \mathbb{R}^{d},\\
\Tilde{u}_{\varepsilon}(0,x)=\Tilde{u}_{0,\varepsilon}(x).
\end{array}
\right.
\end{equation*}

Also, the families of functions satisfying the properties \eqref{A-negl}, we call $\mathbb A_{\infty}$--negligible initial functions and coefficients, respectively.
\end{defn}

\begin{rem}
\label{Rem-negl}
We note that for any two $\psi, \Tilde{\psi}\in \mathbb{A}_{\infty}$ the difference of the corresponding regularisations of the coefficient $q\in C^{\infty}(\mathbb{R}^{d})$ is an $\mathbb A_{\infty}$--negligible function, that is,
$$
\Vert q_{\varepsilon} - \Tilde{q}_{\varepsilon}\Vert_{L^{\infty}}\lesssim \omega^{k}(\varepsilon),
$$
for all $k>0$, for all $\varepsilon\in(0, 1]$. Moreover, $(q_{\varepsilon}-q)_{\varepsilon\in(0, 1]}$ is also an $\mathbb A_{\infty}$--negligible family of functions.
\end{rem}

Note that the result of this remark holds for smooth functions. But in general, it also makes sense for other classes of regular functions and distributions. For more detailed analysis on the topic, the readers are referred to the paper \cite{GR15}.

\begin{thm}
\label{thm unicity classic}
Let $T>0$. Assume that a non-negative function $q\in C^{\infty}(\mathbb{R}^{d})$ and $u_{0}\in H^{1}(\mathbb{R}^{d})$ satisfy the conditions (\ref{Moderetness hyp data}) and (\ref{Moderetness hyp coeff}), respectively. Then, the very weak solution of the Cauchy problem (\ref{Equation}) is unique.
\end{thm}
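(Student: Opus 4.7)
The plan is to reduce the claim to an energy estimate applied to the difference of the two regularised solutions. Let $U_\varepsilon := u_\varepsilon - \tilde u_\varepsilon$. Subtracting the two families of Cauchy problems and writing $q_\varepsilon u_\varepsilon - \tilde q_\varepsilon \tilde u_\varepsilon = q_\varepsilon U_\varepsilon + (q_\varepsilon - \tilde q_\varepsilon)\tilde u_\varepsilon$, one sees that $U_\varepsilon$ solves the inhomogeneous Cauchy problem
\begin{equation*}
\partial_t U_\varepsilon - \Delta U_\varepsilon + q_\varepsilon U_\varepsilon = f_\varepsilon, \qquad U_\varepsilon(0,x) = u_{0,\varepsilon}(x) - \tilde u_{0,\varepsilon}(x),
\end{equation*}
with source term $f_\varepsilon := -(q_\varepsilon - \tilde q_\varepsilon)\tilde u_\varepsilon$. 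Since $q_\varepsilon \geq 0$, the plan is to run the Lemma \ref{Lemma 2} type energy identity for this inhomogeneous problem.

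Multiplying the equation for $U_\varepsilon$ by $U_\varepsilon$, integrating in $x$, and taking real parts exactly as in Lemma \ref{Lemma 2} yields
\begin{equation*}
\tfrac{1}{2}\partial_t \Vert U_\varepsilon(t,\cdot)\Vert_{L^2}^2 + \Vert \nabla U_\varepsilon(t,\cdot)\Vert_{L^2}^2 + \Vert q_\varepsilon^{1/2} U_\varepsilon(t,\cdot)\Vert_{L^2}^2 = \mathrm{Re}\,\langle f_\varepsilon(t,\cdot), U_\varepsilon(t,\cdot)\rangle_{L^2}.
\end{equation*}
Dropping the non-negative terms on the left and using Cauchy--Schwarz gives $\partial_t \Vert U_\varepsilon(t,\cdot)\Vert_{L^2} \leq \Vert f_\varepsilon(t,\cdot)\Vert_{L^2}$, so that Grönwall/Duhamel in its trivial form produces
\begin{equation*}
\Vert U_\varepsilon(t,\cdot)\Vert_{L^2} \leq \Vert u_{0,\varepsilon} - \tilde u_{0,\varepsilon}\Vert_{L^2} + \int_0^T \Vert (q_\varepsilon - \tilde q_\varepsilon)\tilde u_\varepsilon(s,\cdot)\Vert_{L^2}\, ds.
\end{equation*}

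To finish I would control each piece using the $\mathbb A_\infty$--negligibility and moderateness hypotheses. The first term is $\mathbb A_\infty$--negligible by \eqref{A-negl}. For the integrand I estimate $\Vert (q_\varepsilon - \tilde q_\varepsilon)\tilde u_\varepsilon(s,\cdot)\Vert_{L^2} \leq \Vert q_\varepsilon - \tilde q_\varepsilon\Vert_{L^\infty}\Vert \tilde u_\varepsilon(s,\cdot)\Vert_{L^2}$, where the $L^\infty$ factor is $\mathbb A_\infty$--negligible (by Remark \ref{Rem-negl}, noting that $q\in C^\infty$ forces this difference to be smaller than any power of $\omega(\varepsilon)$), and the $L^2$ factor is $\omega(\varepsilon)$--moderate by Lemma \ref{Lemma 2} applied to $\tilde u_\varepsilon$ together with the moderateness assumption \eqref{Moderetness hyp data} on $\tilde u_{0,\varepsilon}$. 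Multiplying a quantity which decays faster than any power of $\omega(\varepsilon)$ by one which grows at most polynomially still beats every power of $\omega(\varepsilon)$, so we get $\Vert U_\varepsilon(t,\cdot)\Vert_{L^2} \lesssim \omega^N(\varepsilon)$ for every $N\in\mathbb N$, which is the desired uniqueness.

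There is no real obstacle here: the only delicate point is keeping track that Remark \ref{Rem-negl} indeed applies and gives $\mathbb A_\infty$--negligibility of $q_\varepsilon - \tilde q_\varepsilon$ for smooth $q$, so that the polynomial blow-up of $\Vert \tilde u_\varepsilon\Vert_{L^2}$ (which Lemma \ref{Lemma 2} bounds by the moderate quantity $\Vert \tilde u_{0,\varepsilon}\Vert_{L^2}$) can be absorbed. Once that is noted, the energy method handles everything and no use of $H^1$ control of $U_\varepsilon$ is needed, which is why this argument works uniformly in $t\in[0,T]$.
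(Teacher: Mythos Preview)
Your proof is correct and follows essentially the same strategy as the paper: derive the inhomogeneous equation for $U_\varepsilon = u_\varepsilon - \tilde u_\varepsilon$ with source $(\tilde q_\varepsilon - q_\varepsilon)\tilde u_\varepsilon$, obtain the $L^2$ bound
\[
\Vert U_\varepsilon(t,\cdot)\Vert_{L^2} \lesssim \Vert u_{0,\varepsilon}-\tilde u_{0,\varepsilon}\Vert_{L^2} + \Vert q_\varepsilon - \tilde q_\varepsilon\Vert_{L^\infty}\int_0^T \Vert \tilde u_\varepsilon(s,\cdot)\Vert_{L^2}\,ds,
\]
and then absorb the moderate growth of $\Vert \tilde u_\varepsilon\Vert_{L^2}$ into the $\mathbb A_\infty$--negligibility of the coefficient and data differences (via Remark~\ref{Rem-negl}). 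The only cosmetic difference is that the paper reaches this estimate by invoking Duhamel's principle and applying Lemma~\ref{Lemma 2} separately to the homogeneous pieces $W_\varepsilon$ and $V_\varepsilon$, whereas you run the energy identity of Lemma~\ref{Lemma 2} directly on the inhomogeneous problem; the resulting inequality and the rest of the argument are identical.
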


\begin{proof}
Let $\psi, \Tilde{\psi}\in \mathbb{A}_{\infty}$ and consider $(q_{\varepsilon})_{\varepsilon}, (\Tilde{q}_{\varepsilon})_{\varepsilon}$ and $(u_{0,\varepsilon})_{\varepsilon}$, $(\Tilde{u}_{0,\varepsilon})_{\varepsilon}$ the regularisations of the coefficient $q$ and the data $u_0$ with respect to $\psi$ and $\Tilde{\psi}$. Assume that
\begin{equation}
    \Vert u_{0,\varepsilon} - \Tilde{u}_{0,\varepsilon}\Vert_{L^2} \leq C_{k}\omega^{k}(\varepsilon),
\end{equation}
for all $k>0$. Then, $u_{\varepsilon}$ and $\Tilde{u_{\varepsilon}}$, the solutions to the related Cauchy problems, satisfy the equation
\begin{equation}
\left\lbrace
\begin{array}{l}
\partial_{t}(u_{\varepsilon}-\Tilde{u}_{\varepsilon})(t,x)-\Delta (u_{\varepsilon}-\Tilde{u}_{\varepsilon})(t,x) + q_{\varepsilon}(x)(u_{\varepsilon}-\Tilde{u}_{\varepsilon})(t,x)=f_{\varepsilon}(t,x),\\
(u_{\varepsilon}-\Tilde{u}_{\varepsilon})(0,x)=(u_{0,\varepsilon}-\Tilde{u}_{0,\varepsilon})(x), \label{Equation uniqueness classic}
\end{array}
\right.
\end{equation}
with
\begin{equation*}
f_{\varepsilon}(t,x)=(\Tilde{q}_{\varepsilon}(x)-q_{\varepsilon}(x))\Tilde{u}_{\varepsilon}(t,x).
\end{equation*}
Let us denote by $U_{\varepsilon}(t,x):=u_{\varepsilon}(t,x)-\Tilde{u}_{\varepsilon}(t,x)$ the solution to the problem (\ref{Equation uniqueness classic}). Using Duhamel's principle, $U_{\varepsilon}$ is given by
\begin{equation*}
U_{\varepsilon}(t, x)=W_{\varepsilon}(t, x) + \int_{0}^{t}V_{\varepsilon}(x,t-s;s)ds,
\end{equation*}
where $W_{\varepsilon}(t, x)$ is the solution to the problem
\begin{equation*}
\left\lbrace
\begin{array}{l}
\partial_{t}W_{\varepsilon}(t, x)-\Delta W_{\varepsilon}(t, x) + q_{\varepsilon}(x)W_{\varepsilon}(t, x)=0,\\
W_{\varepsilon}(0, x)=(u_{0,\varepsilon}-\Tilde{u}_{0,\varepsilon})(x),
\end{array}
\right.
\end{equation*}
and $V_{\varepsilon}(x,t;s)$ solves
\begin{equation*}
\left\lbrace
\begin{array}{l}
\partial_{t}V_{\varepsilon}(x,t;s)-\Delta V_{\varepsilon}(x,t;s) + q_{\varepsilon}(x)V_{\varepsilon}(x,t;s)=0,\\
V_{\varepsilon}(x,0;s)=f_{\varepsilon}(s,x).
\end{array}
\right.
\end{equation*}
Taking $U_{\varepsilon}$ in $L^{2}$-norm and using (\ref{Energy estimate 2}) to estimate $V_{\varepsilon}$ and $W_{\varepsilon}$, we arrive at
\begin{align*}
\Vert U_{\varepsilon}(t, \cdot)\Vert_{L^2} & \leq \Vert W_{\varepsilon}(t, \cdot)\Vert_{L^2} + \int_{0}^{T}\Vert V_{\varepsilon}(\cdot,t-s;s)\Vert_{L^2} ds\\
& \lesssim \Vert u_{0,\varepsilon}-\Tilde{u}_{0,\varepsilon}\Vert_{L^2} + \int_{0}^{T}\Vert f_{\varepsilon}(s,\cdot)\Vert_{L^2} ds\\
& \lesssim \Vert u_{0,\varepsilon}-\Tilde{u}_{0,\varepsilon}\Vert_{L^2} + \Vert \Tilde{q}_{\varepsilon}-q_{\varepsilon}\Vert_{L^{\infty}}\int_{0}^{T}\Vert \Tilde{u}_{\varepsilon}(s,\cdot)\Vert_{L^2} ds.
\end{align*}
The net $(\tilde{u}_{\varepsilon})_{\varepsilon}$ is moderate, the uniqueness of the very weak solution follows by the assumption that $(u_{0,\varepsilon} - \Tilde{u}_{0,\varepsilon})_{\varepsilon\in(0, 1]}$ is an $\mathbb A_{\infty}$--negligible family of initial functions, that is,
\begin{equation*}
\Vert u_{0,\varepsilon} - \Tilde{u}_{0,\varepsilon}\Vert_{L^2} \leq C_{k}\omega^{k}(\varepsilon) \text{~~~for all~~} k>0,
\end{equation*}
the application of Lemma \ref{lem q_eps-q} and Remark \eqref{Rem-negl} due to the $\mathbb A_{\infty}$--negligibly of the family of coefficients
$\Tilde{q}_{\varepsilon}$ and $q_{\varepsilon}$. This ends the proof of the theorem.
\end{proof}

\subsubsection{\textbf{The singular case}}
In the case when $q$ is singular, we prove uniqueness in the sense of the following definition.
\begin{defn} \label{defn:uniqueness singular case}
We say that the very weak solution to the Cauchy problem (\ref{Equation}) is unique, if for all families $(q_{\varepsilon})_{\varepsilon}$, $(\Tilde{q}_{\varepsilon})_{\varepsilon}$ and $(u_{0,\varepsilon})_{\varepsilon}$, $(\Tilde{u}_{0,\varepsilon})_{\varepsilon}$, regularisations of the coefficient $q$ and $u_0$, satisfying
\begin{equation*}
\Vert q_{\varepsilon}-\Tilde{q}_{\varepsilon}\Vert_{L^{\infty}}\leq C_{k}\varepsilon^{k} \text{~~for all~~} k>0
\end{equation*}
and
\begin{equation*}
\Vert u_{0,\varepsilon}-\Tilde{u}_{0,\varepsilon}\Vert_{L^{2}}\leq C_{l}\varepsilon^{l} \text{~~for all~~} l>0,
\end{equation*}
then
\begin{equation*}
\Vert u_{\varepsilon}(t,\cdot)-\Tilde{u}_{\varepsilon}(t,\cdot)\Vert_{L^{2}} \leq C_{N}\varepsilon^{N},
\end{equation*}
for all $N>0$, where $(u_{\varepsilon})_{\varepsilon}$ and $(\Tilde{u}_{\varepsilon})_{\varepsilon}$ solve, respectively, the families of the Cauchy problems
\begin{equation*}
\left\lbrace
\begin{array}{l}
\partial_{t}u_{\varepsilon}(t,x)-\Delta u_{\varepsilon}(t,x) + q_{\varepsilon}(x)u_{\varepsilon}(t,x)=0 ,~~~(t,x)\in\left[0,T\right]\times \mathbb{R}^{d},\\
u_{\varepsilon}(0,x)=u_{0,\varepsilon}(x),
\end{array}
\right.
\end{equation*}
and
\begin{equation*}
\left\lbrace
\begin{array}{l}
\partial_{t}\Tilde{u}_{\varepsilon}(t,x)-\Delta \Tilde{u}_{\varepsilon}(t,x) + \Tilde{q}_{\varepsilon}(x)\Tilde{u}_{\varepsilon}(t,x)=0 ,~~~(t,x)\in\left[0,T\right]\times \mathbb{R}^{d},\\
\Tilde{u}_{\varepsilon}(0,x)=\Tilde{u}_{0,\varepsilon}(x).
\end{array}
\right.
\end{equation*}
\end{defn}

We note that in particular the hypotheses of this definition are fulfilled when $q$ is smooth. But it is not the only case. For more suitable examples of the coefficient $q$, we refer to \cite{GR15}, where a number of classes of regular and distributional $q$ are analysed.

\begin{thm}\label{thm uniqueness}
Let $T>0$. Assume that $q\geq0$ and $u_{0}\in H^{1}(\mathbb{R}^{d})$ satisfy the moderateness assumptions (\ref{Moderetness hyp data}) and (\ref{Moderetness hyp coeff}), respectively. Then, the very weak solution to the Cauchy problem (\ref{Equation}) is unique.
\end{thm}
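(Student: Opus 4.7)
The plan is to follow the structure of the proof of Theorem \ref{thm unicity classic}, with one crucial simplification: the negligibility of $q_\varepsilon - \tilde q_\varepsilon$ in $L^\infty$ is now taken directly from the hypothesis of Definition \ref{defn:uniqueness singular case}, so no analogue of Lemma \ref{lem q_eps-q} (which relied on $q \in C^\infty$) is required. I would begin by setting $U_\varepsilon := u_\varepsilon - \tilde u_\varepsilon$ and observing that it solves the inhomogeneous regularised heat equation
\begin{equation*}
\partial_t U_\varepsilon - \Delta U_\varepsilon + q_\varepsilon U_\varepsilon = f_\varepsilon, \qquad U_\varepsilon(0,x) = (u_{0,\varepsilon} - \tilde u_{0,\varepsilon})(x),
\end{equation*}
with the source term $f_\varepsilon(t,x) := (\tilde q_\varepsilon(x) - q_\varepsilon(x))\, \tilde u_\varepsilon(t,x)$.

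Next, I would apply Duhamel's principle to decompose $U_\varepsilon = W_\varepsilon + \int_0^t V_\varepsilon(\cdot,t-s;s)\,ds$, where $W_\varepsilon$ solves the homogeneous equation with potential $q_\varepsilon$ and initial data $u_{0,\varepsilon}-\tilde u_{0,\varepsilon}$, and for each fixed $s\in[0,T]$ the function $V_\varepsilon(\cdot,\,\cdot\,;s)$ solves the same homogeneous equation with initial data $f_\varepsilon(s,\cdot)$. The key point, and the reason the argument survives in the singular regime, is that Lemma \ref{Lemma 2} controls the $L^2$-norm of a solution purely by the $L^2$-norm of its initial datum with a constant \emph{independent} of $\|q_\varepsilon\|_{L^\infty}$: this is visible in \eqref{Energy functional 2}, which yields $\partial_t \|u(t,\cdot)\|_{L^2}^2 \leq 0$ regardless of how large $q_\varepsilon$ becomes as $\varepsilon \to 0$. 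Applying this to both $W_\varepsilon$ and $V_\varepsilon$, and using the elementary product bound $\|f_\varepsilon(s,\cdot)\|_{L^2} \leq \|\tilde q_\varepsilon - q_\varepsilon\|_{L^\infty}\,\|\tilde u_\varepsilon(s,\cdot)\|_{L^2}$, gives
\begin{equation*}
\|U_\varepsilon(t,\cdot)\|_{L^2} \lesssim \|u_{0,\varepsilon} - \tilde u_{0,\varepsilon}\|_{L^2} + \|\tilde q_\varepsilon - q_\varepsilon\|_{L^\infty} \int_0^T \|\tilde u_\varepsilon(s,\cdot)\|_{L^2}\, ds.
\end{equation*}

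To close the argument I would combine the $C$-moderateness of $(\tilde u_\varepsilon)_\varepsilon$, which furnishes an $s$-uniform bound $\|\tilde u_\varepsilon(s,\cdot)\|_{L^2} \lesssim \omega(\varepsilon)^{-M}$ for some $M \in \mathbb{N}$, with the two standing hypotheses that $\|u_{0,\varepsilon} - \tilde u_{0,\varepsilon}\|_{L^2}$ and $\|q_\varepsilon - \tilde q_\varepsilon\|_{L^\infty}$ are bounded by $C_k \varepsilon^k$ for every $k > 0$. The second term in the displayed inequality is then the product of a super-polynomially small factor and a polynomially moderate one, hence itself super-polynomially small; for any prescribed $N$ one simply chooses $k$ large enough to obtain $\|U_\varepsilon(t,\cdot)\|_{L^2} \leq C_N \varepsilon^N$, which is the required negligibility. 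The only obstacle I foresee is bookkeeping: one must ensure that the negligibility scale $\varepsilon^k$ and the moderateness scale $\omega(\varepsilon)^{-M}$ are compatible, which is standard under the tacit convention that $\omega(\varepsilon)$ is bounded below by a positive power of $\varepsilon$.
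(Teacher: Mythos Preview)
Your proposal is correct and follows essentially the same route as the paper's own proof: subtract the two regularised problems, apply Duhamel's principle together with the $q$-independent $L^2$ estimate of Lemma \ref{Lemma 2}, and then combine the moderateness of $(\tilde u_\varepsilon)_\varepsilon$ with the negligibility hypotheses on $u_{0,\varepsilon}-\tilde u_{0,\varepsilon}$ and $q_\varepsilon-\tilde q_\varepsilon$. Your remark about the compatibility of the scales $\varepsilon^k$ and $\omega(\varepsilon)^{-M}$ is a fair observation; the paper handles this implicitly under the same standard convention you mention.
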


\begin{proof}
Let $(q_{\varepsilon})_{\varepsilon}$, $(\Tilde{q}_{\varepsilon})_{\varepsilon}$ and $(u_{0,\varepsilon})_{\varepsilon}$, $(\Tilde{u}_{0,\varepsilon})_{\varepsilon}$, regularisations of the coefficient $q$ and the data $u_0$, satisfying
\begin{equation*}
\Vert q_{\varepsilon}-\Tilde{q}_{\varepsilon}\Vert_{L^{\infty}}\leq C_{k}\varepsilon^{k}, \text{~~for all~~} k>0,
\end{equation*}
and
\begin{equation*}
\Vert u_{0,\varepsilon}-\Tilde{u}_{0,\varepsilon}\Vert_{L^{2}}\leq C_{l}\varepsilon^{l}, \text{~~for all~~} l>0.
\end{equation*}
Then, $(u_{\varepsilon})_{\varepsilon}$ and $(\Tilde{u}_{\varepsilon})_{\varepsilon}$, the solutions to the related Cauchy problems, satisfy
\begin{equation}
\left\lbrace
\begin{array}{l}
\partial_{t}(u_{\varepsilon}-\Tilde{u}_{\varepsilon})(t,x)-\Delta (u_{\varepsilon}-\Tilde{u}_{\varepsilon})(t,x) + q_{\varepsilon}(x)(u_{\varepsilon}-\Tilde{u}_{\varepsilon})(t,x)=f_{\varepsilon}(t,x),\\
(u_{\varepsilon}-\Tilde{u}_{\varepsilon})(0,x)=(u_{0,\varepsilon}-\Tilde{u}_{0,\varepsilon})(x), \label{Equation uniqueness}
\end{array}
\right.
\end{equation}
with
\begin{equation*}
f_{\varepsilon}(t,x)=(\Tilde{q}_{\varepsilon}(x)-q_{\varepsilon}(x))\Tilde{u}_{\varepsilon}(t,x).
\end{equation*}
Let us denote by $U_{\varepsilon}(t,x):=u_{\varepsilon}(t,x)-\Tilde{u}_{\varepsilon}(t,x)$ the solution to the equation (\ref{Equation uniqueness}). Using similar arguments as in Theorem \ref{thm unicity classic}, we get
\begin{equation*}
\Vert U_{\varepsilon}(t, \cdot)\Vert_{L^2} \lesssim \Vert u_{0,\varepsilon}-\Tilde{u}_{0,\varepsilon}\Vert_{L^2} + \Vert \Tilde{q}_{\varepsilon}-q_{\varepsilon}\Vert_{L^{\infty}}\int_{0}^{T}\Vert \Tilde{u}_{\varepsilon}(s,\cdot)\Vert_{L^2}.
\end{equation*}
The family $(\Tilde{u}_{\varepsilon})_{\varepsilon}$ is a very weak solution to the Cauchy problem (\ref{Equation}), it is then moderate, i.e. there exists $N_0 \in \mathbb{N}_{0}$ such that
\begin{equation*}
\Vert \Tilde{u}_{\varepsilon}(s,\cdot)\Vert_{L^2} \leq c\omega^{-N_0}(\varepsilon).
\end{equation*}
On the other hand, we have that $\Vert q_{\varepsilon}-\Tilde{q}_{\varepsilon}\Vert_{L^{\infty}}\leq C_{k}\varepsilon^{k}$, for all $k>0$, and $\Vert u_{0,\varepsilon}-\Tilde{u}_{0,\varepsilon}\Vert_{L^{2}}\leq C_{l}\varepsilon^{l}$, for all $l>0$. Thus, we obtain that
\begin{equation*}
\Vert U_{\varepsilon}(t, \cdot)\Vert_{L^2}:=\Vert u_{\varepsilon}(t,\cdot)-\Tilde{u}_{\varepsilon}(t,\cdot)\Vert_{L^2} \lesssim \varepsilon^{N},
\end{equation*}
for all $N>0$, showing the uniqueness of the very weak solution.
\end{proof}

\subsection{Consistency with the classical case}
Now we show that if the classical solution of the Cauchy problem (\ref{Equation}) given by Lemma \ref{Lemma 1} exists then the very weak solution recaptures it.

\begin{thm} \label{thm:consistency positive case}
Let $u_{0}\in H^{1}(\mathbb{R}^{d})$. Assume that $q\in L^{\infty}(\mathbb{R}^{d})$ is non-negative and consider the Cauchy problem
\begin{equation}
\left\lbrace
\begin{array}{l}
u_{t}(t,x)-\Delta u(t,x) + q(x)u(t,x)=0 ,~~~(t,x)\in\left[0,T\right]\times \mathbb{R}^{d},\\
u(0,x)=u_{0}(x). \label{Equation with reg. coeff}
\end{array}
\right.
\end{equation}
Let $(u_{\varepsilon})_{\varepsilon}$ be a very weak solution of (\ref{Equation with reg. coeff}). Then, for any regularising families $(q_{\varepsilon})_{\varepsilon}$ and $(u_{0,\varepsilon})_{\varepsilon}$, the net $(u_{\varepsilon})_{\varepsilon}$ converges in $L^{2}$ as $\varepsilon \rightarrow 0$ to the classical solution of the Cauchy problem (\ref{Equation with reg. coeff}).
\end{thm}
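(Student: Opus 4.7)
The plan is to compare the very weak solution net $(u_\varepsilon)_\varepsilon$ with the classical solution $u$ supplied by Lemma \ref{Lemma 1} by setting $V_\varepsilon(t,x) := u_\varepsilon(t,x) - u(t,x)$ and showing $\Vert V_\varepsilon(t,\cdot)\Vert_{L^2}\to 0$ as $\varepsilon\to 0$. A direct substitution shows that $V_\varepsilon$ solves the inhomogeneous heat equation
\begin{equation*}
\partial_{t}V_{\varepsilon}-\Delta V_{\varepsilon} + q_{\varepsilon}(x)V_{\varepsilon}=(q(x)-q_{\varepsilon}(x))u(t,x),
\end{equation*}
with initial datum $V_{\varepsilon}(0,x)=u_{0,\varepsilon}(x)-u_{0}(x)$. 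This is exactly the structure treated in the uniqueness proofs (Theorems \ref{thm unicity classic} and \ref{thm uniqueness}), so the same Duhamel decomposition applies.

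Following that scheme, I would split $V_\varepsilon = W_\varepsilon + \int_0^t V_\varepsilon(\cdot,t-s;s)\,ds$, where $W_\varepsilon$ solves the homogeneous problem with initial data $u_{0,\varepsilon}-u_0$ and, for each $s$, $V_\varepsilon(\cdot,\cdot;s)$ solves the homogeneous problem with initial data $(q-q_\varepsilon)u(s,\cdot)$. Each of these problems has a non-negative bounded potential $q_\varepsilon$ (with $\Vert q_\varepsilon\Vert_{L^\infty}\leq\Vert q\Vert_{L^\infty}$), so Lemma \ref{Lemma 2} yields
\begin{equation*}
\Vert V_{\varepsilon}(t,\cdot)\Vert_{L^{2}} \lesssim \Vert u_{0,\varepsilon}-u_{0}\Vert_{L^{2}} + \int_{0}^{T}\Vert (q-q_{\varepsilon})u(s,\cdot)\Vert_{L^{2}}\,ds,
\end{equation*}
with the implicit constant independent of $\varepsilon$.

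It then remains to send both terms on the right to zero. The first term tends to zero by the standard property of Friedrichs mollifiers applied to $u_0 \in H^1 \hookrightarrow L^2$. For the integral term, I would argue by dominated convergence: for almost every $x$, $q_\varepsilon(x)\to q(x)$ as $\varepsilon\to 0$ by the Lebesgue differentiation theorem, so $(q-q_\varepsilon)(\cdot)u(s,\cdot)\to 0$ a.e., while the pointwise bound $|(q-q_\varepsilon)(x)u(s,x)|\leq 2\Vert q\Vert_{L^\infty}|u(s,x)|$ provides a square-integrable majorant; hence $\Vert(q-q_\varepsilon)u(s,\cdot)\Vert_{L^2}\to 0$ for each $s\in[0,T]$. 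A second application of dominated convergence in $s$, using the uniform bound $\Vert(q-q_\varepsilon)u(s,\cdot)\Vert_{L^2}\leq 2\Vert q\Vert_{L^\infty}\Vert u(s,\cdot)\Vert_{L^2}$ and the continuity $s\mapsto\Vert u(s,\cdot)\Vert_{L^2}$ guaranteed by Lemma \ref{Lemma 1}, gives vanishing of the time integral.

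I do not expect a serious obstacle: the only subtle point is that $q_\varepsilon$ need not converge to $q$ in $L^\infty$ (since $q$ is merely bounded and not necessarily continuous), which rules out a naive uniform estimate $\Vert q-q_\varepsilon\Vert_{L^\infty}\Vert u\Vert_{L^2}$. The workaround, as sketched above, is to push the convergence inside the $L^2$-norm of the product and invoke dominated convergence, which works precisely because $\Vert q_\varepsilon\Vert_{L^\infty}$ is controlled by $\Vert q\Vert_{L^\infty}$ uniformly in $\varepsilon$.
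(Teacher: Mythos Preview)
Your argument follows the same skeleton as the paper's proof: form the difference with the classical solution, derive the inhomogeneous equation with source $(q-q_\varepsilon)u$, split via Duhamel into a homogeneous piece with data $u_{0,\varepsilon}-u_0$ and an integral of homogeneous pieces with data coming from the source, and estimate each using Lemma~\ref{Lemma 2}. The only substantive difference lies in how the source term is handled at the end. The paper pulls out $\Vert q_\varepsilon - q\Vert_{L^\infty}$ and asserts that this tends to zero; you instead keep $(q-q_\varepsilon)u(s,\cdot)$ inside the $L^2$-norm and pass to the limit by dominated convergence, using $q_\varepsilon\to q$ a.e.\ and the uniform majorant $2\Vert q\Vert_{L^\infty}|u(s,\cdot)|$. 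Your route is the more robust one: for a merely bounded (possibly discontinuous) $q$, convergence of $q_\varepsilon$ to $q$ in $L^\infty$ can genuinely fail (think of $q$ a characteristic function), so the paper's final step is not justified at the stated generality, whereas your dominated-convergence argument goes through because $\Vert q_\varepsilon\Vert_{L^\infty}\le\Vert q\Vert_{L^\infty}$ for a non-negative mollifier. In short, the overall strategy matches the paper's, and your handling of the limiting step is an improvement.
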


\begin{proof}
Consider the classical solution $u$ to
\begin{equation*}
\left\lbrace
\begin{array}{l}
u_{t}(t,x)-\Delta u(t,x) + q(x)u(t,x)=0 ,~~~(t,x)\in\left[0,T\right]\times \mathbb{R}^{d},\\
u(0,x)=u_{0}(x).
\end{array}
\right.
\end{equation*}
Note that for the very weak solution there is a representation $(u_{\varepsilon})_{\varepsilon}$ such that
\begin{equation*}
\left\lbrace
\begin{array}{l}
\partial_{t}u_{\varepsilon}(t,x)-\Delta u_{\varepsilon}(t,x) + q_{\varepsilon}(x)u_{\varepsilon}(t,x)=0 ,~~~(t,x)\in\left[0,T\right]\times \mathbb{R}^{d},\\
u_{\varepsilon}(0,x)=u_{0,\varepsilon}(x).
\end{array}
\right.
\end{equation*}
Taking the difference, we get
\begin{equation}
\left\lbrace
\begin{array}{l}
\partial_{t}(u-u_{\varepsilon})(t,x)-\Delta (u-u_{\varepsilon})(t,x) + q_{\varepsilon}(x)(u-u_{\varepsilon})(t,x)=\eta_{\varepsilon}(t,x),\\
(u-u_{\varepsilon})(0,x)=(u_{0}-u_{0,\varepsilon})(x), \label{Equation consistency}
\end{array}
\right.
\end{equation}
where
\begin{equation*}
\eta_{\varepsilon}(t,x)=(q_{\varepsilon}(x)-q(x))u(t,x).
\end{equation*}
Let us denote $U_{\varepsilon}(t,x):=(u-u_{\varepsilon})(t,x)$ and let $W_{\varepsilon}(t, x)$ be the solution to the auxiliary homogeneous problem
\begin{equation*}
\left\lbrace
\begin{array}{l}
\partial_{t}W_{\varepsilon}(t, x)-\Delta W_{\varepsilon}(t, x) + q_{\varepsilon}(x)W_{\varepsilon}(t, x)=0,\\
W_{\varepsilon}(0, x)=(u_{0}-u_{0,\varepsilon})(x).
\end{array}
\right.
\end{equation*}
Then, by Duhamel's principle, the solution to (\ref{Equation consistency}) is given by
\begin{equation}
U_{\varepsilon}(t, x)=W_{\varepsilon}(t, x) + \int_{0}^{t}V_{\varepsilon}(x,t-s;s)ds, \label{Duhamel consistency}
\end{equation}
where $V_{\varepsilon}(x,t;s)$ is the solution to the problem
\begin{equation*}
\left\lbrace
\begin{array}{l}
\partial_{t}V_{\varepsilon}(x,t;s)-\Delta V_{\varepsilon}(x,t;s) + q_{\varepsilon}(x)V_{\varepsilon}(x,t;s)=0,\\
V_{\varepsilon}(x,0;s)=\eta_{\varepsilon}(t,x).
\end{array}
\right.
\end{equation*}
As in Theorem \ref{thm uniqueness}, taking the $L^{2}$-norm in (\ref{Duhamel consistency}) and using (\ref{Energy estimate 2}) to estimate $V_{\varepsilon}$ and $W_{\varepsilon}$, we get
\begin{align*}
\Vert U_{\varepsilon}(t, \cdot)\Vert_{L^2} & \leq \Vert W_{\varepsilon}(t, \cdot)\Vert_{L^2} + \int_{0}^{T}\Vert V_{\varepsilon}(\cdot,t-s;s)\Vert_{L^2} ds\\
& \lesssim \Vert u_{0}-u_{0,\varepsilon}\Vert_{L^2} + \int_{0}^{T}\Vert \eta_{\varepsilon}(s,\cdot)\Vert_{L^2} ds\\
& \lesssim \Vert u_{0}-u_{0,\varepsilon}\Vert_{L^2} + \Vert q_{\varepsilon}-q\Vert_{L^{\infty}}\int_{0}^{T}\Vert u(s,\cdot)\Vert_{L^2} ds,
\end{align*}
and taking into account that
\begin{equation*}
\Vert q_{\varepsilon}-q\Vert_{L^{\infty}} \rightarrow 0 \text{~~as~~} \varepsilon\rightarrow 0
\end{equation*}
and
\begin{equation*}
\Vert u_{0,\varepsilon}-u_{0}\Vert_{L^{2}} \rightarrow 0 \text{~~as~~} \varepsilon\rightarrow 0,
\end{equation*}
consequently, it implies that $u_{\varepsilon}$ converges to $u$ in $L^{2}$ as $\varepsilon\to0$.
\end{proof}

\section{Part II: Negative potential}
\label{NP}
In this part we aim to study the case when the potential is negative and to show that the problem is still well-posed. Namely, we consider the Cauchy problem for the heat equation
\begin{equation}
\label{Equation 2}
\left\lbrace
\begin{array}{l}
\partial_{t}u(t,x)-\Delta u(t,x) - q(x)u(t,x)=0, \,\,\,(t,x)\in\left(0,T\right)\times \mathbb{R}^{d}, \\
u(0,x)=u_{0}(x),
\end{array}
\right.
\end{equation}
where $q$ is non-negative.

In the classical case, we have the following energy estimates for the solution of the problem \eqref{Equation 2}.
\begin{lem} \label{Lemma 3}
Let $u_{0}\in L^{2}(\mathbb{R}^d)$ and suppose that $q\in L^{\infty}(\mathbb{R}^d)$ is non-negative. Then, there is a unique solution $u\in C(\left[0,T\right];L^{2})$ to (\ref{Equation 2}) and it satisfies the estimate
\begin{equation}
    \Vert u(t,\cdot)\Vert_{L^2} \lesssim \exp{\left( t\Vert q\Vert_{L^{\infty}} \right)} \Vert u_0\Vert_{L^2}, \label{Energy estimate 3}
\end{equation}
for all $t\in [0,T]$.
\end{lem}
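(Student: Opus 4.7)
The plan is to mimic the $L^2$-energy method of Lemma \ref{Lemma 2}, accounting for the reversed sign of the potential, which will no longer produce a monotone decay but instead a Grönwall-type exponential growth factor.

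For existence and uniqueness, since $q\in L^{\infty}(\mathbb{R}^d)$, multiplication by $-q$ is a bounded operator on $L^2$. Hence $\Delta+q$ is a bounded perturbation of the generator $\Delta$ of the heat semigroup, and by standard semigroup perturbation theory it generates a strongly continuous semigroup on $L^2$, yielding a unique mild solution $u\in C([0,T];L^2)$. Alternatively, one can repeat the Duhamel representation argument from Lemma \ref{Lemma 1}: write
\begin{equation*}
u(t,x)=\phi_t\ast u_0(x)+\int_0^t \phi_{t-s}\ast(q(\cdot)u(s,\cdot))(x)\,ds,
\end{equation*}
use $\|\phi_t\|_{L^1}=1$ and Young's inequality together with $\|qu(s,\cdot)\|_{L^2}\leq \|q\|_{L^\infty}\|u(s,\cdot)\|_{L^2}$, and close a contraction argument in $C([0,T];L^2)$. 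Uniqueness will in any case follow a posteriori from the energy estimate \eqref{Energy estimate 3} applied to the difference of two solutions with identical Cauchy data.

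For the estimate itself, I multiply \eqref{Equation 2} by $u(t,\cdot)$ in the $L^2$ inner product, integrate in $x\in\mathbb{R}^d$, and take real parts. Exactly as in Lemma \ref{Lemma 2}, the three terms become
\begin{equation*}
\Re\langle u_t(t,\cdot),u(t,\cdot)\rangle_{L^2}=\tfrac{1}{2}\partial_t\|u(t,\cdot)\|_{L^2}^2,\quad -\Re\langle\Delta u(t,\cdot),u(t,\cdot)\rangle_{L^2}=\|\nabla u(t,\cdot)\|_{L^2}^2,
\end{equation*}
and $-\Re\langle q(\cdot)u(t,\cdot),u(t,\cdot)\rangle_{L^2}=-\|q^{1/2}(\cdot)u(t,\cdot)\|_{L^2}^2$. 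Collecting these and using $\|q^{1/2}u(t,\cdot)\|_{L^2}^2\leq \|q\|_{L^\infty}\|u(t,\cdot)\|_{L^2}^2$, I obtain
\begin{equation*}
\tfrac{1}{2}\partial_t\|u(t,\cdot)\|_{L^2}^2+\|\nabla u(t,\cdot)\|_{L^2}^2=\|q^{1/2}(\cdot)u(t,\cdot)\|_{L^2}^2\leq \|q\|_{L^\infty}\|u(t,\cdot)\|_{L^2}^2.
\end{equation*}
Dropping the non-negative gradient term and applying Grönwall's inequality to the non-negative function $t\mapsto\|u(t,\cdot)\|_{L^2}^2$ yields $\|u(t,\cdot)\|_{L^2}^2\leq \exp(2t\|q\|_{L^\infty})\|u_0\|_{L^2}^2$, and taking square roots produces \eqref{Energy estimate 3}.

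There is essentially no serious obstacle here; the only substantive change from Lemma \ref{Lemma 2} is that the sign flip converts the dissipative term $-\|q^{1/2}u\|_{L^2}^2$ into a source, so the energy identity no longer gives decay but a differential inequality that must be closed by Grönwall. Since $q\in L^{\infty}$, the resulting exponential factor $\exp(t\|q\|_{L^\infty})$ is finite and locally uniform in $t\in[0,T]$, which is why mere $L^2$-data (as opposed to the $H^1$-data used in Lemma \ref{Lemma 1}) suffice on the right-hand side.
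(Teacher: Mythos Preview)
Your proof is correct and follows essentially the same route as the paper: multiply \eqref{Equation 2} by $u$, integrate, take real parts, drop the non-negative gradient term, and close with Gr\"onwall's lemma. The only cosmetic differences are that you work with $\|u(t,\cdot)\|_{L^2}^2$ rather than $\|u(t,\cdot)\|_{L^2}$ and that you spell out the existence/uniqueness argument more explicitly than the paper does.
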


\begin{proof}
Multiplying the equation in (\ref{Equation 2}) by $u$, integrating with respect to $x$, and taking the real part, we obtain
\begin{equation*}
    Re \left(\langle u_{t}(t,\cdot),u(t,\cdot)\rangle_{L^2} + \langle -\Delta u(t,\cdot),u(t,\cdot)\rangle_{L^2} - \langle q(\cdot)u(t,\cdot),u(t,\cdot)\rangle_{L^2} \right)=0,
\end{equation*}
for all $t\in [0,T]$. Using similar arguments as in Lemma \ref{Lemma 1} and noting that the term $\Vert q(\cdot)u(t,\cdot)\Vert_{L^2}$ can be estimated by $\Vert q\Vert_{L^{\infty}} \Vert u(t,\cdot)\Vert_{L^2}$, we get
\begin{equation*}
    \partial_{t}\Vert u(t,\cdot)\Vert_{L^2} \lesssim \Vert q\Vert_{L^{\infty}} \Vert u(t,\cdot)\Vert_{L^2},
\end{equation*}
for all $t\in [0,T]$. The desired estimate follows by the application of Gronwall's lemma.
\end{proof}

Let now assume that the potential $q$ and the initial data $u_0$ are singular. Consider the Cauchy problem for the heat equation
\begin{equation}
\label{Equation 3}
\left\lbrace
\begin{array}{l}
\partial_{t}u(t,x)-\Delta u(t,x) - q(x)u(t,x)=0, \,\,\,(t,x)\in\left(0,T\right)\times \mathbb{R}^{d}, \\
u(0,x)=u_{0}(x).
\end{array}
\right.
\end{equation}
In order to prove the existence of a very weak solution to (\ref{Equation 3}), we proceed as in the case of the positive potential. We start by regularising the equation in (\ref{Equation 3}). In other words, using
\begin{equation*}
    \psi_{\varepsilon }(x)=\omega(\varepsilon)^{-d}\psi(x/\omega(\varepsilon)), ~~~\varepsilon\in\left(0,1\right],
\end{equation*}
where $\psi$ is a Friedrichs mollifier and $\omega$ is a positive function converging to $0$ as $\varepsilon \rightarrow 0$, to be chosen later, we regularise $q$ and $u_0$ obtaining the nets $(q_{\varepsilon})_{\varepsilon}=(q\ast\psi_{\varepsilon})_{\varepsilon}$ and $(u_{0,\varepsilon})_{\varepsilon}=(u_0\ast\psi_{\varepsilon})_{\varepsilon}$. For this, we can assume that $q$ and $u_0$ are distributions.

\begin{assum}
\label{Assump_neg}
We assume that there exist $N_0, N_1 \in \mathbb{N}_0$ such that
\begin{equation}
\Vert q_{\varepsilon}\Vert_{L^{\infty}}\leq C_0\omega(\varepsilon)^{-N_0},
\label{Moderetness hyp coeff 1}
\end{equation}
and
\begin{equation}
\Vert u_{0,\varepsilon}\Vert_{L^2}\leq C_1\omega(\varepsilon)^{-N_1}. \label{Moderetness hyp data 1}
\end{equation}
\end{assum}

\subsection{Existence of very weak solutions}
In this subsection we give the definition of a very weak solution adapted to the problem (\ref{Equation 3}). For this, we will make use of the same definition of the moderateness as in the non-negative case. Nevertheless, let us recall it here.
\begin{defn}[Moderateness]
\label{Def:Moderetness 1}
Let $X$ be a Banach space with the norm $\|\cdot\|_{X}$. Then we say that a net of functions $(f_{\varepsilon})_{\varepsilon}$ from $X$ is $X$-moderate, if there exist $N\in\mathbb{N}_{0}$ and $c>0$ such that
\begin{equation*}
\Vert f_{\varepsilon}\Vert_{X} \leq c\omega(\varepsilon)^{-N}.
\end{equation*}
\end{defn}
In what follows, we will use particular cases of $X$. Namely, ${L^2}$-moderate, ${L^{\infty}}$-moderate, and  $C(\left[0,T\right];L^{2})$-moderate families. For the last, we will shortly write $C$-moderate.

\begin{defn}
Let $q$ be non-negative. Then the net $(u_{\varepsilon})_{\varepsilon}$ is said to be a very weak solution to the problem (\ref{Equation 3}), if there exist an ${L^{\infty}}$-moderate regularisation $(q_{\varepsilon})_{\varepsilon}$ of the coefficient $q$ and an $L^2$-moderate regularisation $(u_{0,\varepsilon})_{\varepsilon}$ of $u_0$ such that $(u_{\varepsilon})_{\varepsilon}$ solves the regularized problem
\begin{equation}
\label{Regularized equation 1}
\left\lbrace
\begin{array}{l}
\partial_{t}u_{\varepsilon}(t,x)-\Delta u_{\varepsilon}(t,x) - q_{\varepsilon}(x)u_{\varepsilon}(t,x)=0, ~~~(t,x)\in\left(0,T\right)\times \mathbb{R}^{d},\\
u_{\varepsilon}(0,x)=u_{0,\varepsilon}(x),
\end{array}
\right.
\end{equation}
for all $\varepsilon\in\left(0,1\right]$, and is $C$-moderate.
\end{defn}

\begin{thm}[Existence of a very weak solution]
Let $q\geq 0$. Assume that the nets $(q_{\varepsilon})_{\varepsilon}$ and $(u_{0,\varepsilon})_{\varepsilon}$ satisfy the assumptions (\ref{Moderetness hyp coeff 1}) and (\ref{Moderetness hyp data 1}), respectively. Then the problem (\ref{Equation 3}) has a very weak solution.
\end{thm}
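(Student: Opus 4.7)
The approach parallels the positive-potential case: regularise the equation, invoke the classical energy estimate for the regularised problem from Lemma \ref{Lemma 3}, and then check moderateness of the resulting net. Concretely, I would fix regularisations $(q_\varepsilon)_\varepsilon = (q \ast \psi_\varepsilon)_\varepsilon$ and $(u_{0,\varepsilon})_\varepsilon = (u_0 \ast \psi_\varepsilon)_\varepsilon$ satisfying \eqref{Moderetness hyp coeff 1} and \eqref{Moderetness hyp data 1}. Since $q \geq 0$ and $\psi \geq 0$, each $q_\varepsilon$ is smooth, non-negative, and belongs to $L^\infty(\mathbb{R}^d)$; and $u_{0,\varepsilon} \in L^2(\mathbb{R}^d)$. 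Hence Lemma \ref{Lemma 3} applies directly to the regularised Cauchy problem \eqref{Regularized equation 1} and, for each $\varepsilon \in (0,1]$, furnishes a unique classical solution $u_\varepsilon \in C([0,T]; L^2)$ obeying
\begin{equation*}
\|u_\varepsilon(t,\cdot)\|_{L^2} \lesssim \exp\bigl(t \, \|q_\varepsilon\|_{L^\infty}\bigr) \, \|u_{0,\varepsilon}\|_{L^2}, \qquad t \in [0,T].
\end{equation*}

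Next, I would insert the moderateness bounds \eqref{Moderetness hyp coeff 1} and \eqref{Moderetness hyp data 1} into the above to obtain
\begin{equation*}
\|u_\varepsilon(t,\cdot)\|_{L^2} \leq C_1 \, \omega(\varepsilon)^{-N_1} \exp\bigl(T C_0 \, \omega(\varepsilon)^{-N_0}\bigr),
\end{equation*}
uniformly for $t \in [0,T]$ and $\varepsilon \in (0,1]$. Combined with the fact that $u_\varepsilon$ solves the regularised equation \eqref{Regularized equation 1} with initial data $u_{0,\varepsilon}$, this exhibits $(u_\varepsilon)_\varepsilon$ as the candidate very weak solution, provided the bound is $C$-moderate in the sense of Definition \ref{Def:Moderetness 1}.

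Verifying $C$-moderateness is the delicate step, and I expect it to be the main obstacle. Unlike the non-negative potential regime in Part I, where the energy estimate is polynomial in $\|q\|_{L^\infty}$ so that products of moderate families stay moderate, here Gronwall's lemma produces exponential growth in $\|q_\varepsilon\|_{L^\infty}$, and $\exp(TC_0\,\omega(\varepsilon)^{-N_0})$ is not a priori dominated by any power of $\omega(\varepsilon)^{-1}$. To close this gap one must choose the scale $\omega(\varepsilon) \to 0$ going to zero slowly enough that $\omega(\varepsilon)^{-N_0}$ is only logarithmic in $\omega(\varepsilon)^{-1}$; a natural option is $\omega(\varepsilon) = (\log(1/\varepsilon))^{-1/N_0}$, under which $\exp(TC_0\,\omega(\varepsilon)^{-N_0}) = \varepsilon^{-TC_0}$ becomes a power of $\varepsilon$ and hence is polynomially controlled by $\omega(\varepsilon)^{-N}$ for a sufficiently large $N$ (after reinterpreting the moderateness scale accordingly). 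With such a calibration the family $(u_\varepsilon)_\varepsilon$ is $C$-moderate and the existence of a very weak solution follows.
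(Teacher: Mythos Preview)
Your proposal is correct and follows essentially the same route as the paper: apply the energy estimate \eqref{Energy estimate 3} from Lemma \ref{Lemma 3} to the regularised problem, insert the moderateness bounds, and then tame the resulting exponential by choosing the logarithmic scale $\omega(\varepsilon)\sim(\log(1/\varepsilon))^{-1/N_0}$ so that $\exp(T\omega(\varepsilon)^{-N_0})$ becomes a power of $\varepsilon$. The paper's choice $\omega(\varepsilon)=(\log\varepsilon^{-N_0})^{-1/N_0}$ differs from yours only by an immaterial constant factor, and your parenthetical remark about ``reinterpreting the moderateness scale accordingly'' correctly flags the one point the paper glosses over.
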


\begin{proof}
The nets $(q_{\varepsilon})_{\varepsilon}$ and $(u_{0,\varepsilon})_{\varepsilon}$ are moderate by the assumption. To prove that a very weak solution to the Cauchy problem (\ref{Equation 3}) exists, we need to show that the net $(u_{\varepsilon})_{\varepsilon}$, a solution to the regularized problem (\ref{Regularized equation 1}), is $C$-moderate. Indeed, using the assumptions (\ref{Moderetness hyp coeff 1}), (\ref{Moderetness hyp data 1}) and the estimate (\ref{Energy estimate 3}), we get
\begin{equation*}
    \Vert u(t,\cdot)\Vert_{L^2} \lesssim \exp{\left( t\omega(\varepsilon)^{-N_0}\right)}\omega(\varepsilon)^{-N_1},
\end{equation*}
for all $t\in [0,T]$. Choosing $\omega(\varepsilon)=\left( \log \varepsilon^{-N_0}\right)^{-\frac{1}{N_0}}$,
we obtain that
\begin{align*}
    \Vert u(t,\cdot)\Vert_{L^2} & \lesssim \varepsilon^{-tN_0}\times \left( \log \varepsilon^{-N_0} \right)^{\frac{N_1}{N_0}}\\
    & \lesssim \varepsilon^{-TN_0}\times \varepsilon^{-N_1},
\end{align*}
where the fact that $t\in [0,T]$ and that $\log \varepsilon^{-N_0}$ can be estimated by $\varepsilon^{-N_0}$ are used. Then the net $(u_{\varepsilon})_{\varepsilon}$ is $C$-moderate, implying the existence of very weak solutions.
\end{proof}

\subsection{Uniqueness results}

Here, we prove the uniqueness of the very weak solution to the heat equation with a non-positive potential \eqref{Equation 3} in the spirit of Definition \ref{defn:uniqueness singular case}, adapted to our problem.
\begin{defn} \label{defn:uniqueness_2}
Let the regularisations $(q_{\varepsilon})_{\varepsilon}$ and $(\Tilde{q}_{\varepsilon})_{\varepsilon}$ of $q$ and the regularisations $(u_{0,\varepsilon})_{\varepsilon}$ and $(\Tilde{u}_{0,\varepsilon})_{\varepsilon}$ of $u_0$ satisfy Assumption \ref{Assump_neg}. Then we say that the very weak solution to the heat equation (\ref{Equation 3}) is unique, if for all families $(q_{\varepsilon})_{\varepsilon}$, $(\Tilde{q}_{\varepsilon})_{\varepsilon}$ and $(u_{0,\varepsilon})_{\varepsilon}$, $(\Tilde{u}_{0,\varepsilon})_{\varepsilon}$, satisfying
\begin{equation*}
\Vert q_{\varepsilon}-\Tilde{q}_{\varepsilon}\Vert_{L^{\infty}}\leq C_{k}\varepsilon^{k} \text{~~for all~~} k>0
\end{equation*}
and
\begin{equation*}
\Vert u_{0,\varepsilon}-\Tilde{u}_{0,\varepsilon}\Vert_{L^{2}}\leq C_{l}\varepsilon^{l} \text{~~for all~~} l>0,
\end{equation*}
we have
\begin{equation*}
\Vert u_{\varepsilon}(t,\cdot)-\Tilde{u}_{\varepsilon}(t,\cdot)\Vert_{L^{2}} \leq C_{N}\varepsilon^{N}
\end{equation*}
for all $N>0$, where $(u_{\varepsilon})_{\varepsilon}$ and $(\Tilde{u}_{\varepsilon})_{\varepsilon}$ solve, respectively, the families of the Cauchy problems
\begin{equation*}
\left\lbrace
\begin{array}{l}
\partial_{t}u_{\varepsilon}(t,x)-\Delta u_{\varepsilon}(t,x) - q_{\varepsilon}(x)u_{\varepsilon}(t,x)=0 ,~~~(t,x)\in\left(0,T\right)\times \mathbb{R}^{d},\\
u_{\varepsilon}(0,x)=u_{0,\varepsilon}(x),
\end{array}
\right.
\end{equation*}
and
\begin{equation*}
\left\lbrace
\begin{array}{l}
\partial_{t}\Tilde{u}_{\varepsilon}(t,x)-\Delta \Tilde{u}_{\varepsilon}(t,x) - \Tilde{q}_{\varepsilon}(x)\Tilde{u}_{\varepsilon}(t,x)=0 ,~~~(t,x)\in\left(0,T\right)\times \mathbb{R}^{d},\\
\Tilde{u}_{\varepsilon}(0,x)=\Tilde{u}_{0,\varepsilon}(x).
\end{array}
\right.
\end{equation*}
\end{defn}

\begin{thm}\label{thm uniqueness negative case}
Let $T >0$. Assume that the nets $(q_{\varepsilon})_{\varepsilon}$ and $(u_{0,\varepsilon})_{\varepsilon}$ satisfy the assumptions (\ref{Moderetness hyp coeff 1}) and (\ref{Moderetness hyp data 1}), respectively. Then, the very weak solution to the Cauchy problem (\ref{Equation 3}) is unique.
\end{thm}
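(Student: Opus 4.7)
The plan is to follow the template of the proof of Theorem \ref{thm uniqueness}, but replace the polynomial energy estimate from Lemma \ref{Lemma 2} with the exponential one from Lemma \ref{Lemma 3}, then compensate for the exponential blow-up by exploiting the same logarithmic choice of $\omega(\varepsilon)$ used in the existence theorem for the negative-potential case. Concretely, I set $U_{\varepsilon} := u_{\varepsilon} - \tilde{u}_{\varepsilon}$, which solves
\[
\partial_{t} U_{\varepsilon} - \Delta U_{\varepsilon} - q_{\varepsilon} U_{\varepsilon} = f_{\varepsilon}, \qquad U_{\varepsilon}(0,\cdot) = u_{0,\varepsilon} - \tilde{u}_{0,\varepsilon},
\]
with source $f_{\varepsilon}(t,x) = (\tilde{q}_{\varepsilon}(x) - q_{\varepsilon}(x))\,\tilde{u}_{\varepsilon}(t,x)$. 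Splitting $U_{\varepsilon}$ via Duhamel's principle into a homogeneous piece $W_{\varepsilon}$ with initial data $u_{0,\varepsilon} - \tilde{u}_{0,\varepsilon}$ and an integral of homogeneous pieces $V_{\varepsilon}(\cdot,t-s;s)$ with initial data $f_{\varepsilon}(s,\cdot)$, and applying Lemma \ref{Lemma 3} (with coefficient $q_{\varepsilon}$) to each, I obtain
\[
\Vert U_{\varepsilon}(t,\cdot)\Vert_{L^{2}} \lesssim \exp\bigl(T\Vert q_{\varepsilon}\Vert_{L^{\infty}}\bigr)\left[\Vert u_{0,\varepsilon} - \tilde{u}_{0,\varepsilon}\Vert_{L^{2}} + \Vert q_{\varepsilon} - \tilde{q}_{\varepsilon}\Vert_{L^{\infty}}\int_{0}^{T}\Vert \tilde{u}_{\varepsilon}(s,\cdot)\Vert_{L^{2}}\,ds\right].
\]

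Next I must control the three ingredients on the right. By $C$-moderateness of $(\tilde{u}_{\varepsilon})_{\varepsilon}$, which holds because it is a very weak solution, the integral is bounded by $c\,\omega(\varepsilon)^{-M}$ for some fixed $M\in\mathbb{N}_{0}$. With the choice $\omega(\varepsilon) = (\log \varepsilon^{-N_{0}})^{-1/N_{0}}$ used in the existence theorem, the moderateness assumption (\ref{Moderetness hyp coeff 1}) gives $\Vert q_{\varepsilon}\Vert_{L^{\infty}} \lesssim \log \varepsilon^{-N_{0}}$, so that
\[
\exp\bigl(T\Vert q_{\varepsilon}\Vert_{L^{\infty}}\bigr) \lesssim \varepsilon^{-TN_{0}},
\]
while $\omega(\varepsilon)^{-M}$ is dominated by any negative power of $\varepsilon$. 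Thus the prefactor times the integral bound is of order $\varepsilon^{-M^{*}}$ for some fixed exponent $M^{*}$ depending on $T, N_{0}, M$ but not on $\varepsilon$.

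Finally, the negligibility hypotheses $\Vert u_{0,\varepsilon} - \tilde{u}_{0,\varepsilon}\Vert_{L^{2}} \leq C_{l}\varepsilon^{l}$ and $\Vert q_{\varepsilon} - \tilde{q}_{\varepsilon}\Vert_{L^{\infty}} \leq C_{k}\varepsilon^{k}$ hold for arbitrarily large $k,l$. Given any $N>0$, choosing $k, l \geq N + M^{*}$ absorbs the fixed polynomial blow-up $\varepsilon^{-M^{*}}$ and yields $\Vert U_{\varepsilon}(t,\cdot)\Vert_{L^{2}} \lesssim \varepsilon^{N}$, as required. The main obstacle — and essentially the only substantive difference from Theorem \ref{thm uniqueness} — is the exponential prefactor $\exp\bigl(T\Vert q_{\varepsilon}\Vert_{L^{\infty}}\bigr)$: without the logarithmic scaling of $\omega$, this factor would grow super-polynomially in $\varepsilon^{-1}$ and could not be absorbed by polynomial negligibility. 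The logarithmic choice is precisely what collapses it to a fixed negative power of $\varepsilon$, after which the positive-potential argument transfers verbatim.
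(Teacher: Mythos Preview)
Your proof is correct and follows essentially the same approach as the paper: derive the difference equation for $U_{\varepsilon}$, apply Duhamel and Lemma~\ref{Lemma 3} to obtain an $L^{2}$ bound with the exponential prefactor $\exp(T\Vert q_{\varepsilon}\Vert_{L^{\infty}})$, then use the logarithmic choice $\omega(\varepsilon)=(\log\varepsilon^{-N_{0}})^{-1/N_{0}}$ to reduce that prefactor to a fixed negative power of $\varepsilon$ which the negligibility hypotheses absorb. Your estimate factors the exponential outside the bracket while the paper keeps $\exp(s\Vert q_{\varepsilon}\Vert_{L^{\infty}})$ inside the time integral, and your source term $f_{\varepsilon}$ carries the opposite sign, but both differences are immaterial.
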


\begin{proof}
Let us consider $(q_{\varepsilon})_{\varepsilon}$, $(\Tilde{q}_{\varepsilon})_{\varepsilon}$ and $(u_{0,\varepsilon})_{\varepsilon}$, $(\Tilde{u}_{0,\varepsilon})_{\varepsilon}$, regularisations of the $q$ and $u_0$, satisfying
\begin{equation*}
\Vert q_{\varepsilon}-\Tilde{q}_{\varepsilon}\Vert_{L^{\infty}}\leq C_{k}\varepsilon^{k} 
\text{~~for all~~} k>0
\end{equation*}
and
\begin{equation*}
\Vert u_{0,\varepsilon}-\Tilde{u}_{0,\varepsilon}\Vert_{L^{2}}\leq C_{l}\varepsilon^{l} 
\text{~~for all~~} l>0.
\end{equation*}
Then, $(u_{\varepsilon})_{\varepsilon}$ and $(\Tilde{u}_{\varepsilon})_{\varepsilon}$, the solutions to the related Cauchy problems, satisfy
\begin{equation}
\left\lbrace
\begin{array}{l}
\partial_{t}(u_{\varepsilon}-\Tilde{u}_{\varepsilon})(t,x)-\Delta (u_{\varepsilon}-\Tilde{u}_{\varepsilon})(t,x) - q_{\varepsilon}(x)(u_{\varepsilon}-\Tilde{u}_{\varepsilon})(t,x)=f_{\varepsilon}(t,x),\\
(u_{\varepsilon}-\Tilde{u}_{\varepsilon})(0,x)=(u_{0,\varepsilon}-\Tilde{u}_{0,\varepsilon})(x), \label{Equation uniqueness 1}
\end{array}
\right.
\end{equation}
with
\begin{equation*}
f_{\varepsilon}(t,x)=(q_{\varepsilon}(x)-\Tilde{q}_{\varepsilon}(x))\Tilde{u}_{\varepsilon}(t,x).
\end{equation*}
Let us denote by $U_{\varepsilon}(t,x):=u_{\varepsilon}(t,x)-\Tilde{u}_{\varepsilon}(t,x)$ the solution to the equation (\ref{Equation uniqueness 1}). Arguing as in Theorem \ref{thm unicity classic} and using the estimate (\ref{Energy estimate 3}), we arrive at
\begin{equation*}
\Vert U_{\varepsilon}(t, \cdot)\Vert_{L^2} \lesssim \exp{\left( t\Vert q_{\varepsilon}\Vert_{L^{\infty}} \right)}\Vert u_{0,\varepsilon}-\Tilde{u}_{0,\varepsilon} \Vert_{L^2} + \Vert q_{\varepsilon}-\Tilde{q}_{\varepsilon} \Vert_{L^{\infty}}\int_{0}^{T}\exp{\left( s\Vert q_{\varepsilon}\Vert_{L^{\infty}} \right)}\Vert\Tilde{u}_{\varepsilon}(s,\cdot)\Vert_{L^2} ds.
\end{equation*}
On the one hand, the net $(q_{\varepsilon})_{\varepsilon}$ is moderate by the assumption and $(\Tilde{u}_{\varepsilon})_{\varepsilon}$ is moderate as a very weak solution. From the other hand, we have that
\begin{equation*}
\Vert q_{\varepsilon}-\Tilde{q}_{\varepsilon}\Vert_{L^{\infty}}\leq C_{k}\varepsilon^{k} 
\text{~~for all~~} k>0,
\end{equation*}
and
\begin{equation*}
\Vert u_{0,\varepsilon}-\Tilde{u}_{0,\varepsilon}\Vert_{L^{2}}\leq C_{l}\varepsilon^{l} 
\text{~~for all~~} l>0.
\end{equation*}
By choosing $\omega(\varepsilon)=\left( \log \varepsilon^{-N_0}\right)^{-\frac{1}{N_0}}$ for $q_{\varepsilon}$ in \eqref{Moderetness hyp coeff 1}, it follows that
\begin{equation*}
\Vert U_{\varepsilon}(t, \cdot)\Vert_{L^2}=\Vert u_{\varepsilon}(t,\cdot)-\Tilde{u}_{\varepsilon}(t,\cdot)\Vert_{L^2} \lesssim \varepsilon^{N}, 
\end{equation*}
for all $N>0$, ending the proof.
\end{proof}

\subsection{Consistency with the classical case}
We conclude this section by showing that if the coefficient and the Cauchy data are regular then the very weak solution coincides with the classical one, given by Lemma \ref{Lemma 3}.

\begin{thm}
Let $u_{0}\in L^{2}(\mathbb{R}^{d})$. Assume that $q\in L^{\infty}(\mathbb{R}^{d})$ is non-negative and consider the Cauchy problem
for the heat equation
\begin{equation}
\left\lbrace
\begin{array}{l}
u_{t}(t,x)-\Delta u(t,x) - q(x)u(t,x)=0 ,~~~(t,x)\in\left(0,T\right)\times \mathbb{R}^{d},\\
u(0,x)=u_{0}(x). \label{Equation with reg. coeff 1}
\end{array}
\right.
\end{equation}
Let $(u_{\varepsilon})_{\varepsilon}$ be a very weak solution of the heat equation (\ref{Equation with reg. coeff 1}). Then, for any regularising families $(q_{\varepsilon})_{\varepsilon}$ and $(u_{0,\varepsilon})_{\varepsilon}$, the net $(u_{\varepsilon})_{\varepsilon}$ converges in $L^{2}$ as $\varepsilon \rightarrow 0$ to the classical solution of the Cauchy problem (\ref{Equation with reg. coeff 1}).
\end{thm}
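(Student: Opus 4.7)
The plan is to mirror the structure of Theorem \ref{thm:consistency positive case} in the negative-potential setting, substituting Lemma \ref{Lemma 3} for Lemma \ref{Lemma 2} as the energy tool. I would start by fixing the classical solution $u \in C([0,T]; L^{2})$ supplied by Lemma \ref{Lemma 3} together with a representative $(u_{\varepsilon})_{\varepsilon}$ of the very weak solution satisfying \eqref{Regularized equation 1}. Subtracting the two equations, the difference $U_{\varepsilon}:=u-u_{\varepsilon}$ solves
\begin{equation*}
\partial_{t} U_{\varepsilon} - \Delta U_{\varepsilon} - q_{\varepsilon} U_{\varepsilon} = \eta_{\varepsilon}, \qquad U_{\varepsilon}(0,\cdot) = u_{0} - u_{0,\varepsilon},
\end{equation*}
with source term $\eta_{\varepsilon}(t,x):=(q(x)-q_{\varepsilon}(x))\, u(t,x)$, so the PDE for $U_{\varepsilon}$ is structurally identical to \eqref{Regularized equation 1} but forced by the coefficient-regularisation error acting on the classical solution.

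Next I would invoke Duhamel's principle for the regularised operator $\partial_{t}-\Delta-q_{\varepsilon}$, decomposing $U_{\varepsilon}(t,\cdot) = W_{\varepsilon}(t,\cdot) + \int_{0}^{t} V_{\varepsilon}(\cdot,t-s;s)\, ds$, where $W_{\varepsilon}$ solves the homogeneous problem with data $u_{0}-u_{0,\varepsilon}$ and $V_{\varepsilon}(\cdot,\cdot;s)$ the homogeneous problem with data $\eta_{\varepsilon}(s,\cdot)$. Since $q\in L^{\infty}$, Young's inequality gives $\|q_{\varepsilon}\|_{L^{\infty}}\leq \|q\|_{L^{\infty}}$ uniformly in $\varepsilon$, so each homogeneous piece obeys \eqref{Energy estimate 3} with a prefactor controlled by $e^{T\|q\|_{L^{\infty}}}$. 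Collecting and using $\|\eta_{\varepsilon}(s,\cdot)\|_{L^{2}}\leq \|q-q_{\varepsilon}\|_{L^{\infty}}\|u(s,\cdot)\|_{L^{2}}$ yields
\begin{equation*}
\|U_{\varepsilon}(t,\cdot)\|_{L^{2}} \lesssim e^{T\|q\|_{L^{\infty}}}\Bigl(\|u_{0}-u_{0,\varepsilon}\|_{L^{2}} + \|q_{\varepsilon}-q\|_{L^{\infty}}\int_{0}^{T}\|u(s,\cdot)\|_{L^{2}}\, ds\Bigr).
\end{equation*}

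To conclude, each factor on the right tends to zero as $\varepsilon\to 0$: the exponential prefactor is $\varepsilon$-independent and finite; the time integral is finite because $u\in C([0,T];L^{2})$; and both $\|u_{0}-u_{0,\varepsilon}\|_{L^{2}}\to 0$ and $\|q_{\varepsilon}-q\|_{L^{\infty}}\to 0$ by the standard mollifier properties already accepted in the positive-potential consistency theorem. This delivers the desired convergence $u_{\varepsilon}\to u$ in $L^{2}$, uniformly on $[0,T]$. The main conceptual point to watch is the $L^{\infty}$-convergence $q_{\varepsilon}\to q$, which is the same subtlety as in Theorem \ref{thm:consistency positive case}; apart from this, the argument is a routine transcription of the positive-potential proof with the exponential energy bound \eqref{Energy estimate 3} replacing the polynomial one \eqref{Energy estimate 2}.
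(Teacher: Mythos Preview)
Your proposal is correct and follows essentially the same route as the paper: subtract the classical and regularised equations, apply Duhamel's principle for the operator $\partial_t-\Delta-q_\varepsilon$, control each homogeneous piece via the exponential $L^2$ estimate \eqref{Energy estimate 3}, and conclude from the mollifier convergences $\|u_0-u_{0,\varepsilon}\|_{L^2}\to 0$ and $\|q_\varepsilon-q\|_{L^\infty}\to 0$. Your use of Young's inequality to get the uniform bound $\|q_\varepsilon\|_{L^\infty}\le\|q\|_{L^\infty}$, and hence an $\varepsilon$-independent exponential prefactor $e^{T\|q\|_{L^\infty}}$, is a slight streamlining of the paper's presentation (which keeps $\exp(t\|q_\varepsilon\|_{L^\infty})$ in the estimate and then argues boundedness separately), but the argument is otherwise identical.
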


\begin{proof}
Let us denote the classical solution and the very weak one by $u$ and $(u_{\varepsilon})_{\varepsilon}$, respectively. It is clear, that they satisfy
\begin{equation*}
\left\lbrace
\begin{array}{l}
u_{t}(t,x)-\Delta u(t,x) - q(x)u(t,x)=0, \,\,\,(t,x)\in\left(0,T\right)\times \mathbb{R}^{d},\\
u(0,x)=u_{0}(x),
\end{array}
\right.
\end{equation*}
and
\begin{equation*}
\left\lbrace
\begin{array}{l}
\partial_{t}u_{\varepsilon}(t,x)-\Delta u_{\varepsilon}(t,x) - q_{\varepsilon}(x)u_{\varepsilon}(t,x)=0, \,\,\,(t,x)\in\left(0,T\right)\times \mathbb{R}^{d},\\
u_{\varepsilon}(0,x)=u_{0,\varepsilon}(x),
\end{array}
\right.
\end{equation*}
respectively. Let us denote by $V_{\varepsilon}(t,x):=(u_{\varepsilon}-u)(t,x)$. Using the estimate (\ref{Energy estimate 3}) and the same arguments as in the positive potential case, we show that
\begin{equation*}
\Vert V_{\varepsilon}(t, \cdot)\Vert_{L^2} \lesssim \exp{\left( t\Vert q_{\varepsilon}\Vert_{L^{\infty}} \right)}\Vert u_{0,\varepsilon}-u_{0} \Vert_{L^2} + \Vert q_{\varepsilon}-q \Vert_{L^{\infty}}\int_{0}^{T}\exp{\left( s\Vert q_{\varepsilon}\Vert_{L^{\infty}} \right)}\Vert u(s,\cdot)\Vert_{L^2} ds.
\end{equation*}
By taking into account that
\begin{equation*}
\Vert q_{\varepsilon}-q\Vert_{L^{\infty}} \rightarrow 0 \text{~~as~~} \varepsilon\rightarrow 0
\end{equation*}
and
\begin{equation*}
\Vert u_{0,\varepsilon}-u_{0}\Vert_{L^{2}} \rightarrow 0 \text{~~as~~} \varepsilon\rightarrow 0,
\end{equation*}
from the other hand, due to the facts $q_{\varepsilon}$ is bounded as a regularisation of an essentially bounded function and $\Vert u(s,\cdot)\Vert_{L^2}$ is bounded as well as $u$ is a classical solution, we conclude that $(u_{\varepsilon})_{\varepsilon}$ converges to $u$ in $L^{2}$ as $\varepsilon\to0$.
\end{proof}

\begin{figure}[ht!]
\begin{minipage}[h]{0.47\linewidth}
\center{\includegraphics[scale=0.35]{./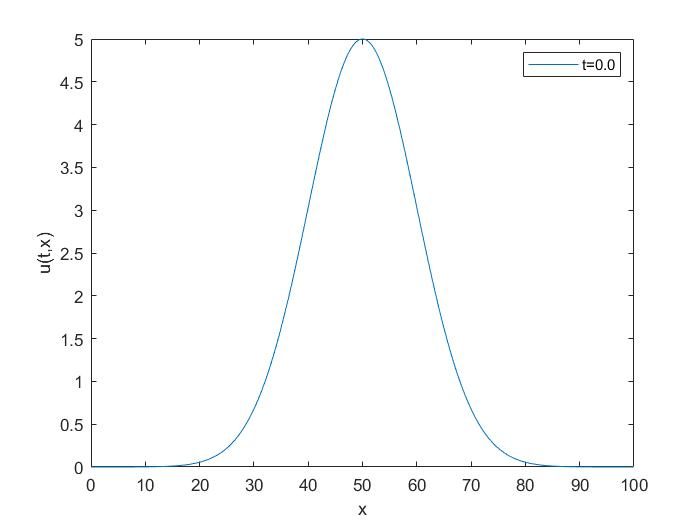}}
\end{minipage}
\hfill
\begin{minipage}[h]{0.47\linewidth}
\center{\includegraphics[scale=0.35]{./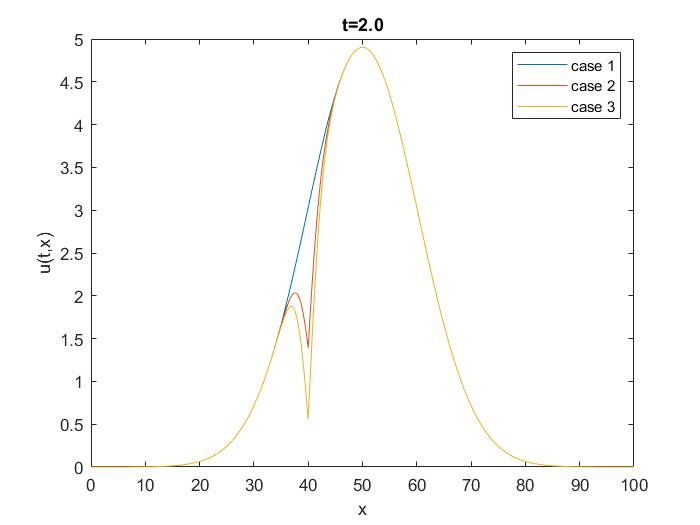}}
\end{minipage}
\hfill
\begin{minipage}[h]{0.47\linewidth}
\center{\includegraphics[scale=0.35]{./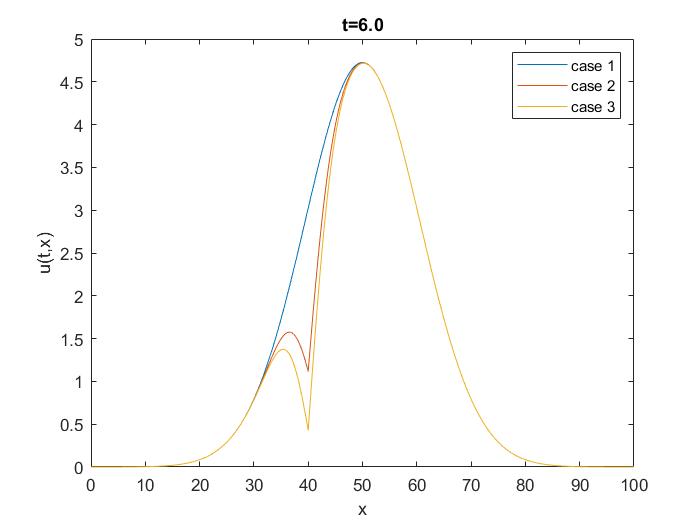}}
\end{minipage}
\hfill
\begin{minipage}[h]{0.47\linewidth}
\center{\includegraphics[scale=0.35]{./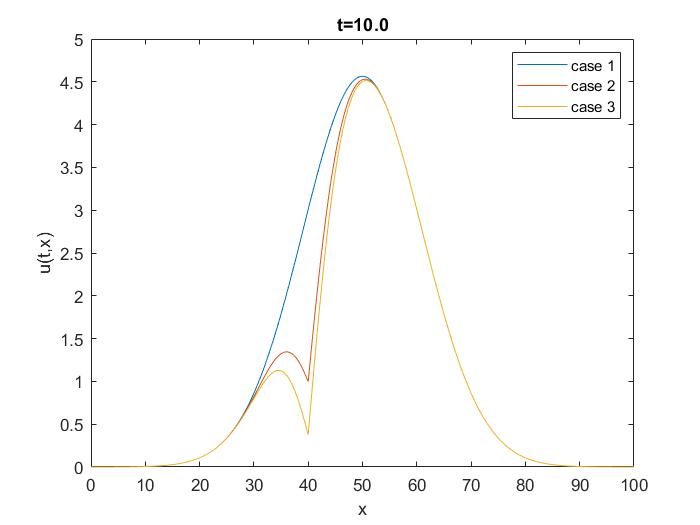}}
\end{minipage}
\caption{In these plots, we analyse behaviour of the temperature in three different cases. In the top left plot, the graphic of the initial function is given. In the further plots, we compare the temperature function $u$ which is the solution of \eqref{RE-01} at $t=2, 6, 10$ for $\varepsilon=0.2$ in three cases. Case 1 is corresponding to the potential $q$ equal to zero. Case 2 is corresponding to the case when the potential $q$ is a $\delta$-function with the support at point $40$. Case 3 is corresponding to a $\delta^2$-like function potential with the support at point $40$.} \label{fig1}
\end{figure}

\begin{figure}[ht!]
\begin{minipage}[h]{0.47\linewidth}
\center{\includegraphics[scale=0.35]{./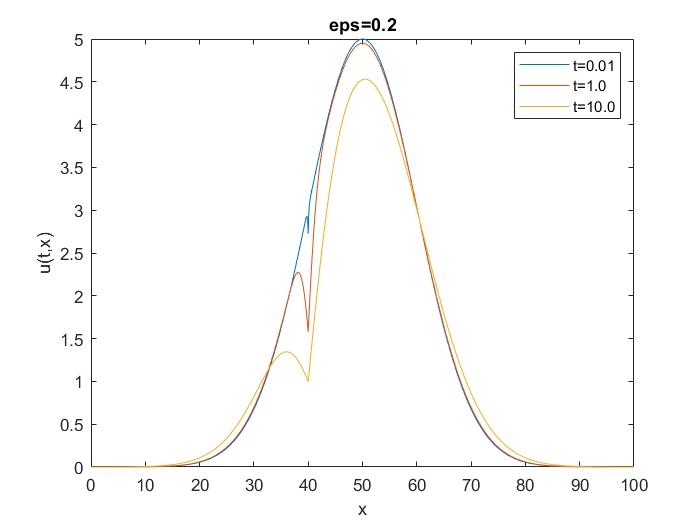}}
\end{minipage}
\hfill
\begin{minipage}[h]{0.47\linewidth}
\center{\includegraphics[scale=0.35]{./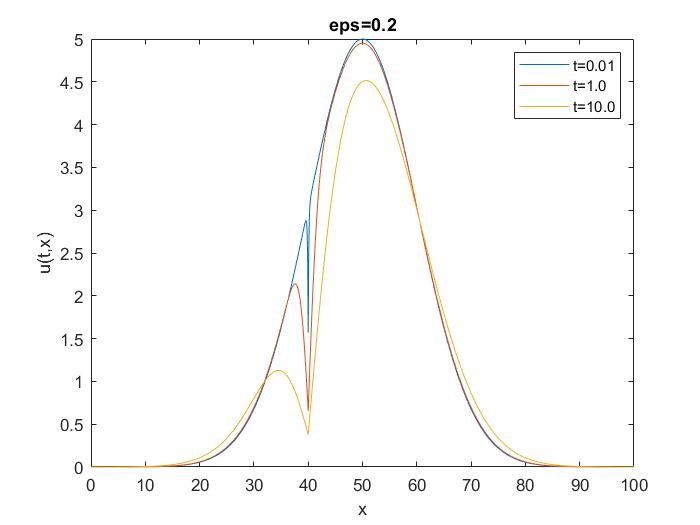}}
\end{minipage}
\caption{In these plots, we compare the temperature function $u$ at $t=0.01, 1.0, 10.0$ for $\varepsilon=0.2$ in the second and third cases: when the potential is $\delta$-like and $\delta^{2}$-like functions with the support at point $40$, respectively. The left picture is corresponding to the second case. The right picture is corresponding to the third case.}
\label{fig2}
\end{figure}

\begin{figure}[ht!]
\begin{minipage}[h]{0.30\linewidth}
\center{\includegraphics[scale=0.25]{./u0.jpg}}
\end{minipage}
\hfill
\begin{minipage}[h]{0.30\linewidth}
\center{\includegraphics[scale=0.25]{./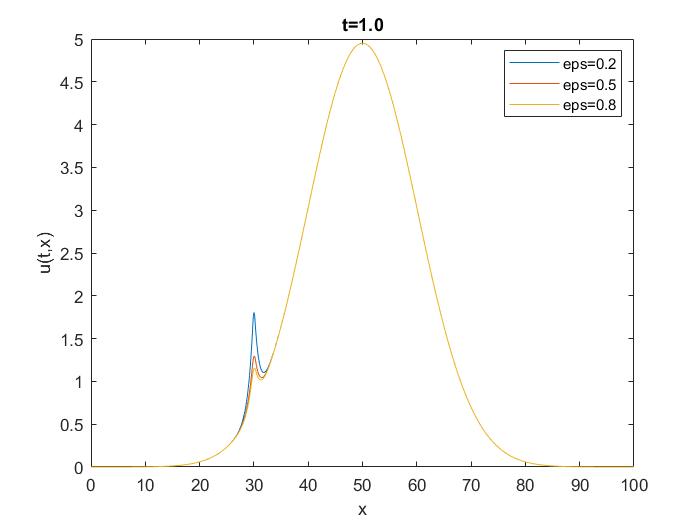}}
\end{minipage}
\hfill
\begin{minipage}[h]{0.30\linewidth}
\center{\includegraphics[scale=0.25]{./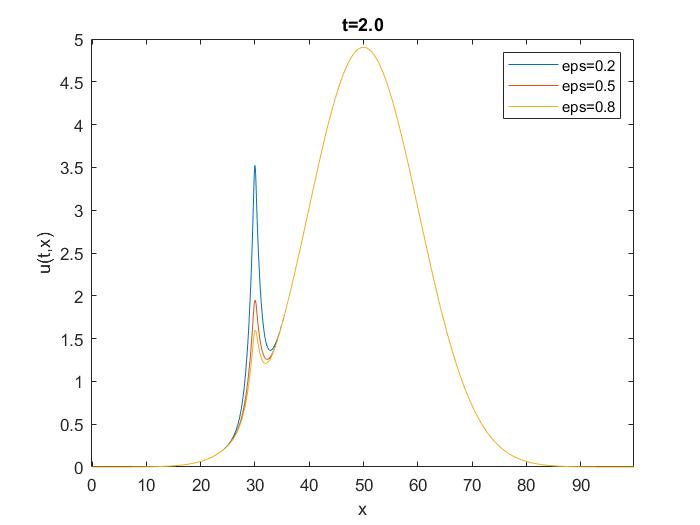}}
\end{minipage}
\hfill
\begin{minipage}[h]{0.30\linewidth}
\center{\includegraphics[scale=0.25]{./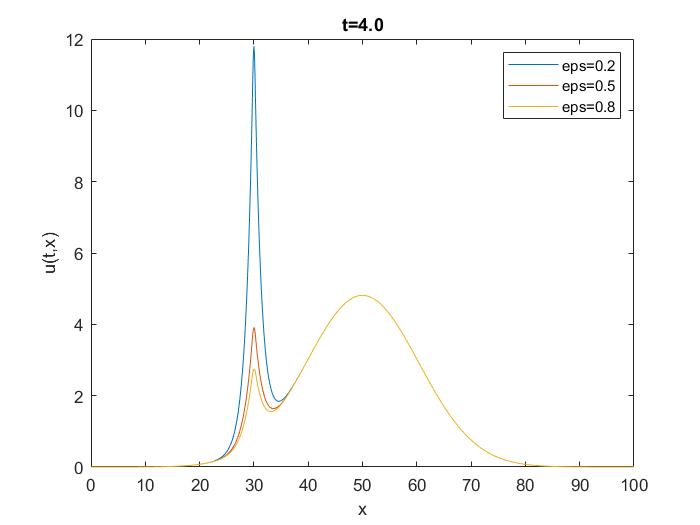}}
\end{minipage}
\hfill
\begin{minipage}[h]{0.30\linewidth}
\center{\includegraphics[scale=0.25]{./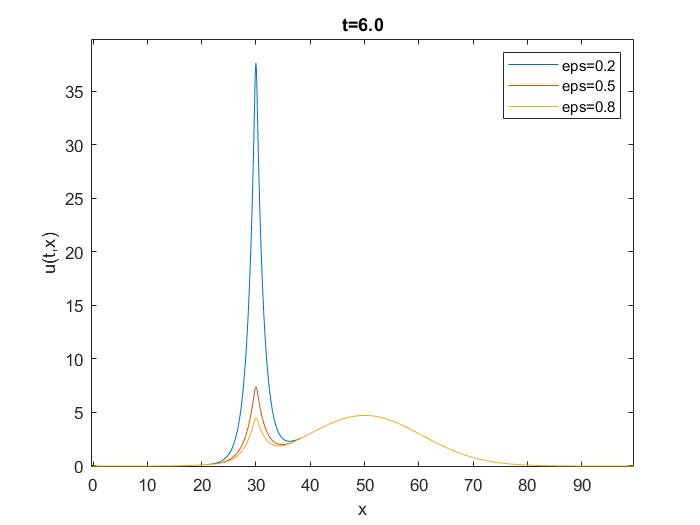}}
\end{minipage}
\hfill
\begin{minipage}[h]{0.30\linewidth}
\center{\includegraphics[scale=0.25]{./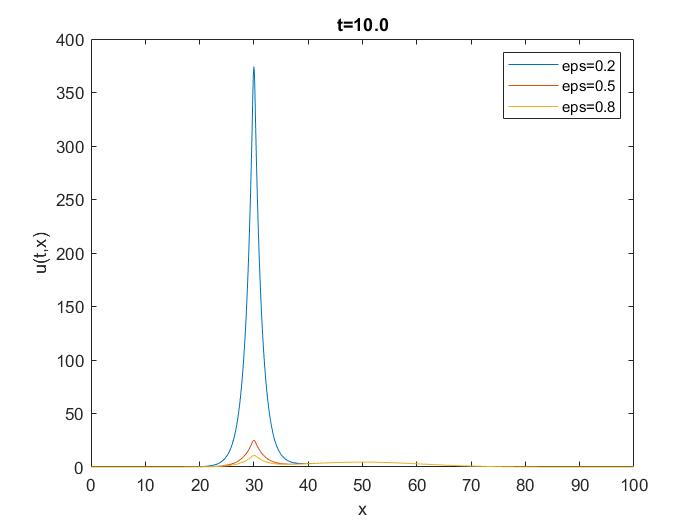}}
\end{minipage}
\caption{In these plots, we analyse behaviour of the solution of the heat equation \eqref{RE-01-Negative} with the negative potential. In the top left plot, the graphic of the temperature distribution at the initial time. In the further plots, we compare the temperature function $u$ at $t=1, 2, 4, 6, 10$ for $\varepsilon=0.8, 0.5, 0.2$. Here, the case of the potential with a $\delta$-like function behaviour with the support at point $30$ is considered.}
\label{fig3}
\end{figure}

\section{Numerical experiments}

In this Section, we do some numerical experiments. Let us analyse our problem by regularising a distributional potential $q(x)$ by a parameter $\varepsilon$. We define
$
q_\varepsilon (x):=(q\ast\varphi_\varepsilon)(x),
$
as the convolution with the mollifier
$\varphi_\varepsilon(x)=\frac{1}{\varepsilon} \varphi(x/\varepsilon),$
where
$$
\varphi(x)=
\begin{cases}
c \exp{\left(\frac{1}{x^{2}-1}\right)}, |x| < 1, \\
0, \,\,\,\,\,\,\,\,\,\,\,\,\,\,\,\,\,\,\,\,\,\,\,\,\,\,\,\,  |x|\geq 1,
\end{cases}
$$
with  $c \simeq 2.2523$ to have
$
\int\limits_{-\infty}^{\infty}  \varphi(x)dx=1.
$
Then, instead of \eqref{Equation} we consider the regularised problem
\begin{equation}\label{RE-01}
\partial_{t}u_{\varepsilon}(t,x)-\partial^{2}_{x} u_{\varepsilon}(t,x)+ q_{\varepsilon}(x) u_{\varepsilon}(t,x) =0, \; (t,x)\in[0,T]\times\mathbb R,
\end{equation}
with the initial data $u_\varepsilon(0,x)=u_0 (x)$, for all $x\in\mathbb R.$ Here, we put
\begin{equation}
\label{u_0}
u_0 (x)=
\begin{cases}
\exp{\left(\frac{1}{(x-50)^{2}-0.25}\right)}, \,\, |x-50| < 0.5, \\
0, \,\,\,\,\,\,\,\,\,\,\,\,\,\,\,\,\,\,\,\,\,\,\,\,\,\,\,\,\,\,\, \,\,\,\,\,\, \,\,\, |x-50| \geq 0.5.
\end{cases}
\end{equation}
Note that ${\rm supp }\, u_0\subset[49.5, 50.5]$.

In the non-negative potential case, for $q$ we consider the following cases, with $\delta$ denoting the standard Dirac's delta-distribution:
\begin{itemize}
  \item[Case 1:] $q(x)=0$ with $q_{\varepsilon}(x)=0$;
  \item[Case 2:] $q(x)=\delta(x-40)$ with $q_{\varepsilon}(x)=\varphi_\varepsilon(x-40)$;
  \item[Case 3:] $q(x)=\delta(x-40)\times\delta(x-40)$. Here, we understand $q_{\varepsilon}(x)$ as follows $q_{\varepsilon}(x)=\left(\varphi_\varepsilon(x-40)\right)^{2};$
\end{itemize}

In Figure \ref{fig1}, we study behaviour of the  temperature function $u$ which is the solution of \eqref{RE-01} at $t=2, 6, 10$ for $\varepsilon=0.2$ in three cases: the first case is corresponding to the potential $q$ equal to zero; the second case is corresponding to the case when the potential $q$ is a $\delta$-function with the support at point $40$; the third case is corresponding to a $\delta^2$-like function potential with the support at point $40$. By comparing these cases, we observe that in the second and in the third cases a place of the support of the $\delta$-function is cooling down faster rather that zero-potential case. This phenomena can be described as a "point cooling" or "laser cooling" effect.

In Figure \ref{fig2}, we compare the temperature function $u$ at $t=0.01, 1.0, 10.0$ for $\varepsilon=0.2$ in the second and third cases: when the potential is $\delta$-like and $\delta^{2}$-like functions with the supports at point $40$, respectively. The left picture is corresponding to the second case. The right picture is corresponding to the third case.

In Figures \ref{fig1} and \ref{fig2}, we analyse the equation \eqref{RE-01} with positive potentials. Now, in Figure \ref{fig3}, we study the following equation with negative potentials:
\begin{equation}\label{RE-01-Negative}
\partial_{t}u_{\varepsilon}(t,x)-\partial^{2}_{x} u_{\varepsilon}(t,x) - q_{\varepsilon}(x) u_{\varepsilon}(t,x) =0, \; (t,x)\in[0,T]\times\mathbb R,
\end{equation}
with the same initial data $u_0$ as in \eqref{u_0}. In these plots, we compare the temperature function $u$ at $t=1, 2, 4, 6, 10$ for $\varepsilon=0.8, 0.5, 0.2$ corresponding to the potential with a $\delta$-like function with the support at point $30$. Numerical simulations justify the theory developed in Section \ref{NP}. Moreover, we observe that the negative $\delta$-potential case a place of the support of the $\delta$-function is heating up. This phenomena can be described as a "point heating" or "laser heating" effect. Also, one observes that our numerical calculations prove the behaviour of the solution related to the parameter $\varepsilon$.

All numerical computations are made in C++ by using the sweep method. In above numerical simulations, we use the Matlab R2018b. For all simulations we take $\Delta t=0.2$, $\Delta x=0.01.$

\subsection{Conclusion} The analysis conducted in this article showed that numerical methods work well in situations where a rigorous mathematical formulation of the problem is difficult in the framework of the classical theory of distributions. The concept of very weak solutions eliminates this difficulty in the case of the terms with multiplication of distributions. In particular, in the potential heat equation case, we see that a delta-function potential helps to loose/increase energy in a less time, the latter causing a so-called "laser cooling/heating" effect in the positive/negative potential cases.

Numerical experiments have shown that the concept of very weak solutions is very suitable for numerical modelling. In addition, using the theory of very weak solutions, we can talk about the uniqueness of numerical solutions of differential equations with strongly singular coefficients in an appropriate sense.

\section*{Acknowledgement} This research was funded by the Science Committee of the Ministry of Education and Science of the Republic of Kazakhstan (Grant No. AP09058069) and by the FWO Odysseus 1 grant G.0H94.18N: Analysis and Partial Differential Equations. MR was supported in parts by the EPSRC Grant EP/R003025/2. AA was funded in parts by the SC MES RK Grant No. AP08052028. MS was supported by the Algerian Scholarship P.N.E. 2018/2019 during his visit to the University of Stuttgart and Ghent University. Also, Mohammed Sebih thanks Professor Jens Wirth and Professor Michael Ruzhansky for their warm hospitality.

\end{document}